\documentclass{article}


\usepackage{showlabels}

\usepackage{combelow}
\usepackage{caption}
\usepackage{subcaption} 
\usepackage{amsthm,amsfonts,amssymb,amsmath,amscd} 
\usepackage{aliascnt} 
\usepackage{mathrsfs} 
\usepackage{xargs,ifthen} 
\usepackage{multirow,array} 

\usepackage{nomencl} 
\makenomenclature

\usepackage{comment}
\usepackage{color}
\usepackage{xypic} 
\usepackage{tikz}
\usetikzlibrary{calc}




\usepackage{hyperref}
\definecolor{sxdarkblue}{RGB}{150,180,200} 
\definecolor{sxlightblue}{RGB}{186,215,230} 
\definecolor{sxred}{RGB}{153,0,0} 
\definecolor{tocolor}{rgb}{.1,.1,.1}
\definecolor{urlcolor}{rgb}{.2,.2,.6}
\definecolor{linkcolor}{rgb}{.1,.1,.5}
\definecolor{citecolor}{rgb}{.4,.2,.1}
\hypersetup{backref=true, colorlinks=true, urlcolor=urlcolor, linkcolor=linkcolor, citecolor=citecolor}


\newcommandx{\thdef}[2]{
	\newaliascnt{#1}{theorem}  
	\newtheorem{#1}[#1]{#2}
	\aliascntresetthe{#1}  
	\newtheorem*{#1*}{#2}
	\expandafter\newcommand\expandafter{\csname #1autorefname\endcsname}{#2}
}

\makeatletter
\newtheorem*{rep@theorem}{\rep@title}
\newcommand{\newreptheorem}[2]{%
\newenvironment{rep#1}[1]{%
 \def\rep@title{#2 \ref{##1}}%
 \begin{rep@theorem}}%
 {\end{rep@theorem}}}
\makeatother

\newtheorem{theorem}{Theorem}[section]
\newreptheorem{theorem}{Theorem}

\thdef{lemma}{Lemma}
\thdef{corollary}{Corollary}
\thdef{conjecture}{Conjecture}
\thdef{question}{Question}
\thdef{problem}{Problem}
\newreptheorem{conjecture}{Conjecture}
\thdef{proposition}{Proposition}

\theoremstyle{definition}
\thdef{definition}{Definition}
\thdef{notation}{Notation}

\theoremstyle{remark}
\thdef{remark}{Remark}

\theoremstyle{remark}
\thdef{example}{Example}


\newcommand{\defn}[1]{\textbf{\textit{#1}}} 

\newcommand{\spc}[1]{\mathsf{#1}} 
\newcommand{\shf}[1]{\mathcal{#1}} 


\newcommand{\CC}{\mathbb{C}}
\newcommand{\ZZ}{\mathbb{Z}}
\newcommand{\NN}{\mathbb{N}}

\newcommand{\PP}[1][]{\mathbb{P}^{#1}}

\newcommand{\Gr}[1]{\fn{\spc{Gr}}[{#1}]}


\newcommand{\rbrac}[1]{\left(#1\right)} 
\newcommand{\abrac}[1]{\left\langle#1\right\rangle} 

\newcommandx{\fn}[2][2=]{#1\ifthenelse{\equal{#2}{}}{}{\!\rbrac{{#2}}}} 
\newcommandx{\id}[2][2=]{\fn{{\rm id}_{#1}}[#2]} 


\newcommand{\ext}[2][\bullet]{\spc{\Lambda}^{#1}{#2}} 
\newcommandx{\sym}[3][1=\bullet,3=]{\spc{S}^{#1}_{#3}{#2}} 
\newcommand{\tens}[2][\bullet]{\spc{T}^{#1}{#2}} 
\newcommandx{\End}[2][1=]{\fn{\spc{End}_{#1}}[#2]} 
\newcommandx{\Hom}[2][1=]{\fn{\spc{Hom}_{#1}}[#2]} 
\newcommandx{\Aut}[2][1=]{\fn{\spc{Aut}_{#1}}[#2]} 
\newcommandx{\aut}[2][1=]{\fn{\mathfrak{aut}_{#1}}[{#2}]} 
\newcommandx{\image}[1]{\fn{\spc{img}}[#1]} 
\renewcommandx{\ker}[1]{\fn{\spc{ker}}[#1]} 
\newcommandx{\rank}[1]{\fn{\mathrm{rank}}[#1]} 

\newcommandx{\ann}[1]{\fn{\spc{ann}}[\spc{#1}]} 



\newcommandx{\hlgy}[3][1=\bullet,3=]{\spc{H}_{#1}^{#3}\!\rbrac{{#2}}} 
\newcommandx{\cohlgy}[3][1=\bullet,3=]{\spc{H}^{#1}_{#3}\!\rbrac{{#2}}} 
\newcommandx{\hypcohlgy}[3][1=\bullet,3=]{\mathbb{H}^{#1}_{#3}\!\rbrac{{#2}}} 
\newcommandx{\chow}[3][1=\bullet,3=]{\spc{A}^{#1}_{#3}\!\rbrac{{#2}}} 

\newcommandx{\Ext}[3][1=\bullet,3=]{\fn{\spc{Ext}^{#1}_{#3}}[{#2}]} 
\newcommandx{\Tor}[3][1=\bullet,3=]{\fn{\spc{Tor}^{#1}_{#3}}[{#2}]} 

\newcommandx{\Pic}[2][1=]{\fn{\spc{Pic}_{#1}}[{#2}]} 
\newcommandx{\chernalg}[2][1=\bullet]{\fn{\spc{Chern}^{#1}}[{#2}]} 
\newcommandx{\chern}[2][1=]{\fn{c_{#1}}[#2]} 
\newcommandx{\ch}[2][1=]{\fn{\mathrm{ch}_{#1}}[{#2}]} 


\newcommandx{\sKer}[2][1=]{ \fn{ \shf{K}er_{#1}}[{#2}] } 
\newcommandx{\sImg}[2][1=]{ \fn{ \shf{I}mg_{#1}}[{#2}] } 
\newcommandx{\sHom}[2][1=]{ \fn{ \shf{H}om_{#1}}[{#2}] } 
\newcommandx{\sEnd}[2][1=]{ \fn{ \shf{E}nd_{#1}}[{#2}] } 
\newcommandx{\sExt}[3][1=\bullet,3=]{\fn{\shf{E}xt^{#1}_{#3}}[{#2}]} 
\newcommandx{\sTor}[3][1=\bullet,3=]{\fn{\shf{T}or^{#1}_{#3}}[{#2}]} 
\newcommand{\sO}[1]{\shf{O}_{#1}}


\newcommandx{\ssym}[3][1=\bullet,3=]{\shf{S}ym^{#1}_{#3}{#2}} 

\newcommandx{\forms}[2][1=\bullet]{\Omega^{#1}_{#2}} 
\newcommandx{\can}[1][1=]{\omega_{#1}} 
\newcommandx{\acan}[1][1=]{\omega_{#1}^{-1}} 
\newcommandx{\tshf}[1]{\shf{T}_{#1}} 
\newcommandx{\mvect}[2][1=\bullet]{ \ext[#1]{\tshf{#2}} }
\newcommandx{\der}[2][1=\bullet]{\mathscr{X}^{#1}_{#2}} 
\newcommandx{\sJet}[3][1=,2=]{\shf{J}^{#1}_{#2}#3} 


\newcommandx{\tb}[2][1=]{\spc{T}_{#1}{#2}} 
\newcommandx{\ctb}[2][1=]{\spc{T}_{\!#1}^*{#2}} 
\newcommandx{\lie}[2][2=]{\fn{\mathscr{L}_{#1}}[#2]} 
\newcommandx{\hook}[2][2=]{\fn{i_{#1}}[#2]} 
\newcommand{\cvf}[1]{\partial_{{#1}}} 




\newcommandx{\Dgn}[2][1=]{\fn{\spc{Dgn}_{#1}}[#2]}
\newcommandx{\AD}[2][1=]{\fn{\spc{Dgn}_{#1}}[\ps_{#2}]}
\newcommandx{\Sing}[1]{\fn{\spc{Sing}}[#1]}
\newcommandx{\Sec}[2][1=]{\fn{\spc{Sec}_{#1}}[#2]}


\newcommand{\I}{\spc{I}}
\newcommand{\bL}{\spc{L}}

\newcommand{\M}{\spc{M}}
\newcommand{\bS}{\spc{S}}

\newcommand{\bR}{\spc{R}}

\newcommand{\W}{\spc{W}}
\newcommand{\X}{\spc{X}}

\newcommand{\Z}{\spc{Z}}

\newcommand{\V}{\spc{V}}

\newcommand{\D}{\spc{D}}


\newcommand{\sE}{\shf{E}}
\newcommand{\E}{\spc{E}}

\newcommand{\sL}{\shf{L}}







\newcommand{\A}{\spc{A}}


\newcommand{\GL}[1]{\fn{\spc{GL}}[#1]}
\newcommand{\SL}[1]{\fn{\spc{SL}}[#1]}

\newcommand{\g}{\mathfrak{g}}

\newcommand{\sln}[1]{\fn{\mathfrak{sl}}[#1]}

\newcommand{\G}{\spc{G}}





\newcommand{\ps}{\sigma} 
\newcommand{\QA}[1]{\fn{\spc{QA}}[{#1}]} 
\newcommand{\Proj}[1]{\fn{\spc{Proj}}[{#1}]} 

\newcommand{\CLimgwidth}{2in}


\renewcommand{\CLimgwidth}{1.2in}

\newcommand{\bpymemail}{pym@maths.ox.ac.uk}
\newcommand{\R}{\spc{R}}

\newcommand{\QDef}{\spc{QDef}}

\renewcommand{\QA}{\spc{QA}}

\newcommand{\PB}{\spc{PB}}

\newcommand{\CLNimage}{
\begin{figure}[t]
\begin{center}
\begin{tabular}{ccc}
$\bL(1,1,1,1)$ & $\bR(2,2)$ & $\bS(2,3)$  \\
\includegraphics[width=\CLimgwidth]{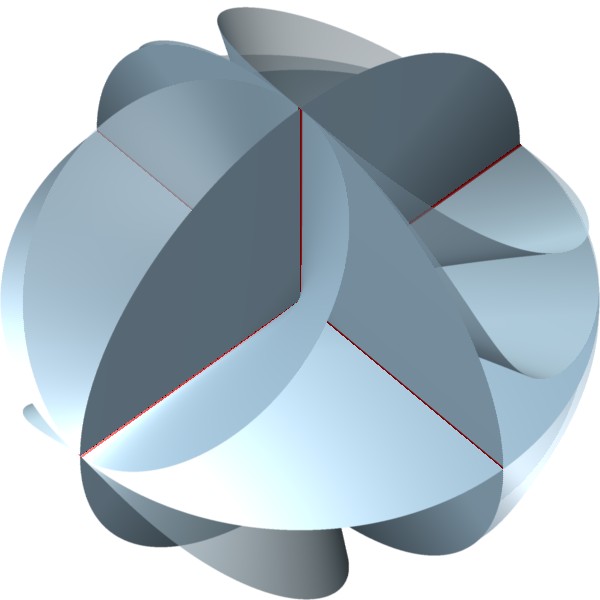}
 &
\includegraphics[width=\CLimgwidth]{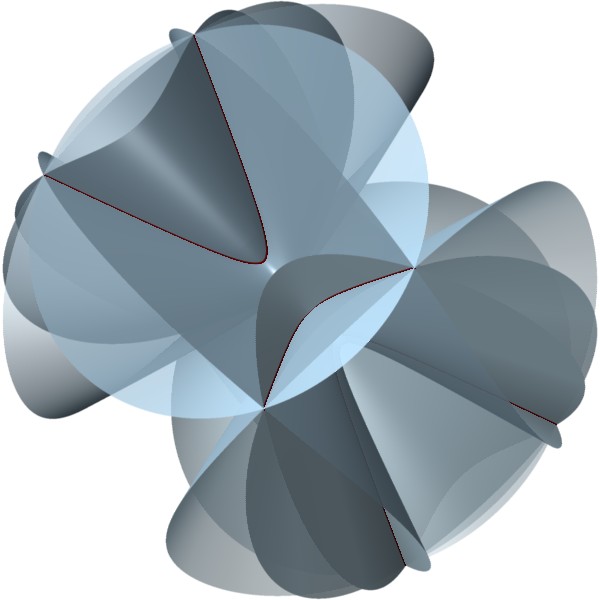}
&
\includegraphics[width=\CLimgwidth]{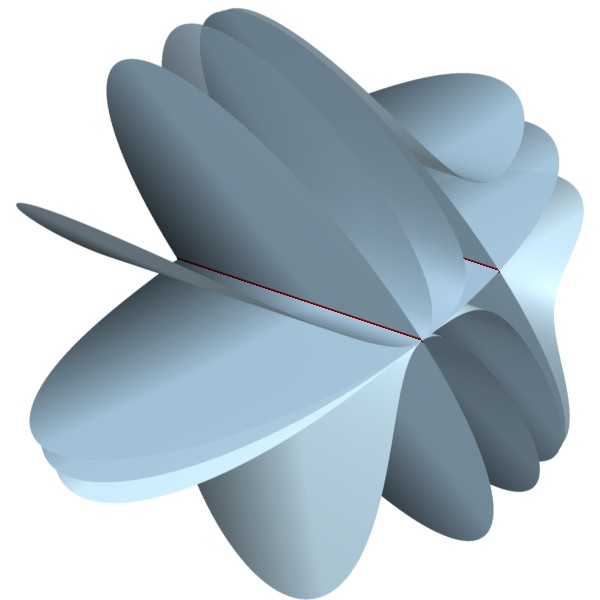}
\\
\includegraphics[width=\CLimgwidth]{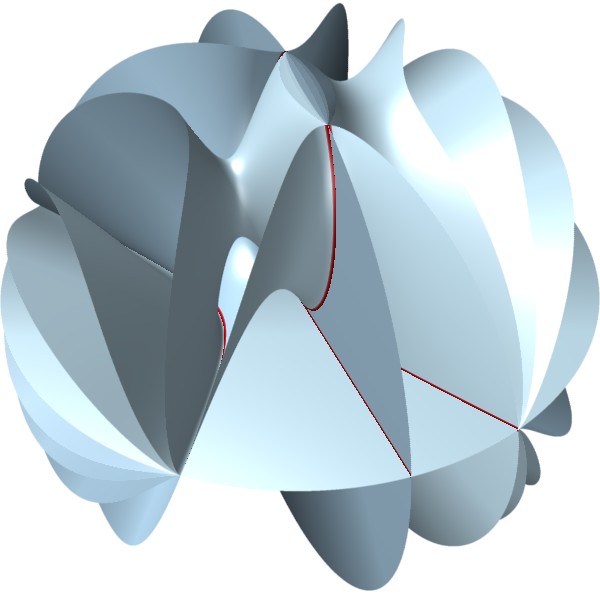}
&
\includegraphics[width=\CLimgwidth]{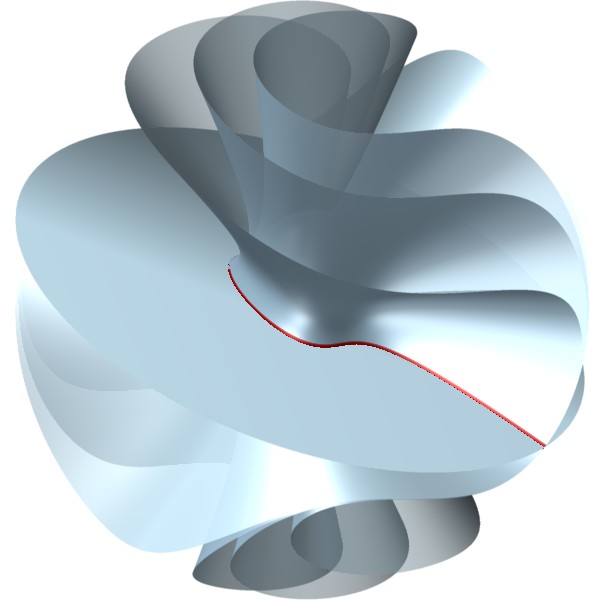} &
\includegraphics[width=\CLimgwidth]{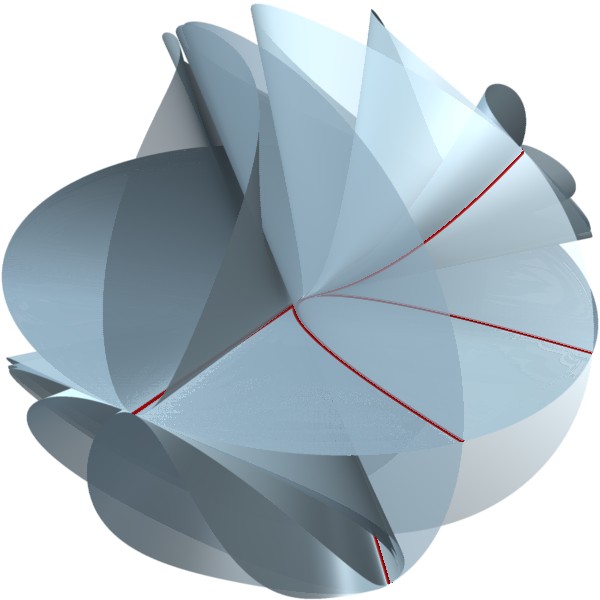}
\\
$\bL(1,1,2)$ & $\bR(1,3)$ & $\E(3)$\\ \ \\
\end{tabular}
\end{center}
\caption[The six families of Poisson structures on {$\PP[3]$}]{Real slices of Poisson structures on $\PP[3]$, one from each irreducible component in the classification.  The blue surfaces represent symplectic leaves, while the red curves represent one-dimensional components of the zero locus.  For ease of plotting, the $\bL(1,1,1,1)$, $\bL(1,1,2)$ and $\bS(2,3)$ examples shown here were chosen to have algebraic symplectic leaves.  They therefore do not represent ``generic'' elements of the corresponding families.  Nevertheless, they do exhibit several features of the generic geometry (such as the blue planes and red lines in the $\bL(1,1,1,1)$ case).}\label{fig:CLN}
\end{figure}
}

\begin{document}

\title{Quantum deformations of projective three-space}
\author{Brent Pym\footnote{Mathematical Institute, University of Oxford, \href{mailto:\bpymemail}{\texttt{\bpymemail}}}}
\maketitle

\begin{abstract}
We describe the possible noncommutative deformations of complex projective three-space by exhibiting the Calabi--Yau algebras that serve as their homogeneous coordinate rings.  We prove that the space parametrizing such deformations has exactly six irreducible components, and we give explicit presentations for the generic members of each family in terms of generators and relations.  The proof uses deformation quantization to reduce the problem to a similar classification of unimodular quadratic Poisson structures in four dimensions, which we extract from Cerveau and Lins Neto's classification of degree-two foliations on projective space.  Corresponding to the ``exceptional'' component in their  classification is a quantization of the third symmetric power of the projective line that supports bimodule quantizations of the classical Schwarzenberger bundles.
\end{abstract}

\tableofcontents

\section{Introduction}

Complex projective space $\PP[n]$, while it admits no deformations as a classical algebraic variety, nevertheless admits several interesting quantum (i.e.~noncommutative) deformations.  This paper is concerned with the classification of such deformations in the case $n=3$, and its relation to problems in Poisson geometry and foliation theory.

Following the philosophy of noncommutative projective geometry, we model quantum versions of $\PP[n]$ using their homogeneous coordinate rings, which are noncommutative analogues of polynomial rings and satisfy the Artin--Schelter (AS) regularity criterion.  This criterion essentially encodes the smoothness of the corresponding noncommutative $\CC^{n+1}$; it was introduced in \cite{Artin1987}, where the classification is achieved in the case when $n$ is at most two.

Shortly after their introduction, the AS regular algebras that correspond to noncommutative versions of the projective plane $\PP[2]$ and the quadric surface $\PP[1]\times \PP[1]$ were studied by Artin, Tate and Van den Bergh~\cite{Artin1990}, and Bondal and Polishchuk~\cite{Bondal1993a}, where the classification is restated in geometric terms.  Corresponding to the fact that a Poisson structure on a surface is determined by a section of the anticanonical bundle, there is a close link between the noncommutative algebras and the geometry of anticanonical divisors in the classical surfaces.

Since that work, a substantial theory of noncommutative curves and surfaces has been developed; see, for example, the survey article~\cite{Stafford2001}.  The study of higher-dimensional quantum projective varieties is an active area of research, and the case of $\PP[3]$, in the form of AS regular algebras of global dimension four, has attracted considerable attention.  In addition to the development in~\cite{Goetz2003} of several general properties of quantum $\PP[3]$s, numerous examples of their homogeneous coordinate rings are known; they include skew-polynomial rings, the four-dimensional Sklyanin algebras~\cite{Sklyanin1982,Smith1992}, extensions of three-dimensional Artin--Schelter regular algebras~\cite{Cassidy1999,LeBruyn1996}, double extension algebras~\cite{Zhang2009}; and the homogenization of the universal enveloping algebra of the $\sln{2}$ Lie algebra~\cite{LeBruyn1993}.  Nevertheless, the full classification remains ``one of the central questions in noncommutative projective geometry''~\cite{Lu2007}.

Amongst all of the AS regular algebras are those that are also Calabi--Yau in the sense of~\cite{Ginzburg2006}.  
This subclass is important because other AS regular algebras can be obtained from Calabi--Yau ones by various twisting procedures; see ~\cite{Goodman2013,He2013,Reyes2013}.  In fact, as we explain in \autoref{sec:cy}, although a given quantum deformation of $\PP[n]$ may be representable by many different AS regular algebras, there will be a unique one that is Calabi--Yau; this idea underlies Etingof and Ginzburg's approach~\cite{Etingof2010} to noncommutative del Pezzo surfaces.  Our main result can thus be viewed as a classification of the quantum deformations of $\PP[3]$:
\begin{theorem}\label{thm:qa-cmpts}
Flat deformations of the polynomial ring $\CC[x_0,x_1,x_2,x_3]$ as a graded Calabi--Yau algebra come in six irreducible families.  These families are realized as the closures of the $\GL{4,\CC}$-orbits of the normal forms summarized in \autoref{tab:p3quant}.
\end{theorem}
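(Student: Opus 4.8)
The plan is to descend from noncommutative algebra to Poisson geometry, and from there to foliation theory, tracking irreducible components at each stage.

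First I would reduce the classification to a problem about Poisson bivectors. As recalled in \autoref{sec:cy}, every quantum deformation of $\PP[3]$ has a distinguished graded Calabi--Yau representative, so it suffices to classify flat graded Calabi--Yau deformations of $\CC[x_0,x_1,x_2,x_3]$. Such deformations are governed by the Hochschild complex of the polynomial ring; in the grading-preserving sector the first-order deformations are bivector fields with quadratic coefficients, the integrability of a deformation to all orders forces the Poisson condition $[\pi,\pi]=0$, and the Calabi--Yau constraint translates into \emph{unimodularity} of $\pi$ with respect to the standard volume form. I would then invoke Kontsevich-style deformation quantization, adapted to the homogeneous and unimodular setting, to promote each unimodular quadratic Poisson structure to an actual flat graded Calabi--Yau algebra. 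The essential claim to nail down is that this assignment is an equivalence of moduli up to $\GL{4,\CC}$: every graded Calabi--Yau deformation arises in this way, and two Poisson structures yield isomorphic algebras precisely when they differ by a linear coordinate change. Granting this, the irreducible components of the deformation space coincide with those of the variety of unimodular quadratic Poisson structures on $\CC^4$.

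Next I would projectivize. A quadratic Poisson structure on $\CC^4$ is the affine cone over a global section $\pi\in H^0\!\big(\PP[3],\ext[2]{\tshf{\PP[3]}}\big)$. Using the isomorphism $\ext[2]{\tshf{\PP[3]}}\cong\forms[1]{\PP[3]}\otimes\acan[\PP[3]]\cong\forms[1]{\PP[3]}(4)$, contraction converts $\pi$ into a twisted one-form $\omega\in H^0\!\big(\PP[3],\forms[1]{\PP[3]}(4)\big)$, under which the Schouten identity $[\pi,\pi]=0$ becomes the Frobenius condition $\omega\wedge d\omega=0$. Since a codimension-one foliation of degree $d$ on $\PP[3]$ is exactly an integrable section of $\forms[1]{\PP[3]}(d+2)$, the case $d=2$ identifies generically-rank-two Poisson structures with \emph{degree-two foliations}: the symplectic leaves of $\pi$ are the leaves of $\omega$, and unimodularity is what guarantees that $\pi$ corresponds to a genuine foliation one-form rather than a bivector with nonzero divergence. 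The foliation remembers $\omega$ only up to scale whereas $\pi$ also records the leafwise symplectic form, but this discrepancy is a connected $\CC^\times$-ambiguity that cannot split or merge irreducible components.

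Finally I would feed in Cerveau and Lins Neto's theorem that the space of codimension-one holomorphic foliations of degree two on $\PP[n]$, $n\ge 3$, has exactly six irreducible components; specializing to $n=3$ and transporting the result back through the two dictionaries above yields six irreducible families of graded Calabi--Yau deformations. It then remains to make each family explicit: identify the six components --- the logarithmic families $\bL(1,1,1,1)$ and $\bL(1,1,2)$, the pencil families $\bR(2,2)$ and $\bR(1,3)$, the family $\bS(2,3)$, and the exceptional family $\E(3)$ of \autoref{fig:CLN} --- choose a representative Poisson bivector in each, quantize it to obtain the presentation recorded in \autoref{tab:p3quant}, and confirm that each presentation is flat and Calabi--Yau, for instance by exhibiting a superpotential whose cyclic derivatives are the relations and checking the expected Koszul and Hilbert-series numerics. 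One must also verify that the six families stay distinct and irreducible after quantization, so that the $\GL{4,\CC}$-orbit closures of these normal forms are precisely the six components.

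I expect the chief obstacle to be the moduli equivalence of the first step: showing that quantization gives a bijection on moduli rather than merely an isomorphism of tangent spaces. This demands controlling the higher obstructions to homogeneous quantization (in $HH^3$ of the appropriate weight), proving flatness and the Calabi--Yau property of the quantized algebras, and establishing that inequivalent Poisson structures cannot quantize to isomorphic algebras. Secondary difficulties are tracking unimodularity accurately through projectivization so that it matches the foliation side exactly, and verifying the Calabi--Yau property for the least standard family $\E(3)$, whose quantization is a noncommutative $\mathrm{Sym}^3\PP[1]$ supporting the Schwarzenberger bundles.
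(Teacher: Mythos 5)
Your proposal follows essentially the same route as the paper: reduce via Kontsevich-style quantization and the Calabi--Yau/unimodularity correspondence to classifying unimodular quadratic Poisson structures, identify these with integrable sections of $\forms[1]{\PP[3]}(4)$, invoke the Cerveau--Lins Neto classification of degree-two foliations, and then quantize a normal form in each of the six components, certifying each algebra as Koszul and Calabi--Yau via an explicit superpotential. The chief obstacle you flag is handled more cheaply in the paper than you anticipate: one does not need to show that inequivalent Poisson structures give non-isomorphic algebras, since Kontsevich's isomorphism of analytic germs (together with the fact that the Poisson variety is a cone, so its germ at $0$ sees all components) already yields a bijection on irreducible components, which is all the theorem requires.
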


It is important to note that the families described in the theorem intersect nontrivially; it is not clear how to describe all of the possible intersections, which is why we focus in this paper on suitably generic elements of each family (a Zariski open set in each component).

The strategy of the proof of \autoref{thm:qa-cmpts} is to reduce the problem to the study of the semi-classical limits using Kontsevich's results~\cite{Kontsevich2001,Kontsevich2003} on deformation quantization.  Work of Dolgushev \cite{Dolgushev2009} shows that Kontsevich's deformation quantization of a given Poisson structure will be a Calabi--Yau algebra if and only if the Poisson structure is unimodular in the sense of Weinstein~\cite{Weinstein1997}, and so \autoref{thm:qa-cmpts} can be deduced from the related
\begin{theorem}\label{thm:pb-cmpts}
The variety parametrizing unimodular quadratic Poisson structures in four dimensions has exactly six irreducible components.
\end{theorem}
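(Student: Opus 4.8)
The plan is to translate the problem into the language of holomorphic foliations on $\PP[3]$ and then invoke the classification of degree-two foliations due to Cerveau and Lins Neto (see the introduction), which asserts that the space of codimension-one foliations of degree two on $\PP[n]$, $n\ge 3$, has exactly six irreducible components. The entire content of the proof is then a dictionary identifying unimodular quadratic Poisson structures on $\CC^4$ with such foliations in a way that is an isomorphism of parameter varieties, and hence carries irreducible components to irreducible components.

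First I would fix the standard volume form $\mu=dx_0\wedge\cdots\wedge dx_3$ and realize the parameter space $\PB$ as a subvariety of the $60$-dimensional space of bivectors $\ps=\sum_{i<j}\ps^{ij}\,\partial_{x_i}\wedge\partial_{x_j}$ with $\ps^{ij}$ homogeneous quadratic, cut out by two conditions: unimodularity, i.e. the vanishing of the modular vector field $\phi_\ps=\sum_{i,j}\partial_{x_j}\ps^{ij}\,\partial_{x_i}$, and the Jacobi identity $[\ps,\ps]=0$. Since $\partial_{x_i}\partial_{x_j}\ps^{ij}=0$ by antisymmetry, $\phi_\ps$ is automatically divergence-free, so unimodularity is a system of $15$ linear equations cutting out a linear subspace $U$ of the expected dimension $45$. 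Writing $\eta=\iota_\ps\mu$ for the associated quadratic $2$-form, the key computations are: (i) $\ps$ is unimodular precisely when $\eta$ is closed, via the Koszul identity $d\eta=-\iota_{\phi_\ps}\mu$; and (ii) on $U$ the Batalin--Vilkovisky identity for the generator of the Schouten bracket reduces to $\iota_{[\ps,\ps]}\mu=d\,\mathrm{Pf}(\ps)$, so that (the Pfaffian being homogeneous of positive degree) the Jacobi identity is equivalent to $\mathrm{Pf}(\ps)=0$.

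Next I would connect this with foliations. Contracting with the Euler field $E=\sum_i x_i\partial_{x_i}$ produces the twisted $1$-form $\omega=\iota_E\eta$, which satisfies $\iota_E\omega=0$ and therefore descends to a global section of $\forms[1]{\PP[3]}(4)\cong\mvect[2]{\PP[3]}$; concretely it is the image of the descended bivector under the canonical isomorphism $\mvect[2]{\PP[3]}\cong\forms[1]{\PP[3]}(4)$. For unimodular $\ps$ one has $d\omega=4\eta$, whence the pointwise identity $\omega\wedge d\omega=c\,\mathrm{Pf}(\ps)\,\iota_E\mu$ for a nonzero constant $c$, so Frobenius integrability of $\omega$ is again equivalent to $\mathrm{Pf}(\ps)=0$. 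Combining with the previous step, the conditions ``$\ps$ Poisson'', ``$\mathrm{Pf}(\ps)=0$'', and ``$\omega$ integrable'' all coincide on $U$. Since the linear map $U\to H^0(\PP[3],\forms[1]{\PP[3]}(4))$ given by the unimodular lift is a $\GL{4,\CC}$-equivariant map of $45$-dimensional spaces, I would check it is an isomorphism, and conclude that it restricts to an isomorphism from $\PB$ onto the variety of integrable twisted $1$-forms, i.e. the space of degree-two codimension-one foliations on $\PP[3]$.

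With the dictionary in place, the six irreducible components of $\PB$ are exactly the preimages of the six Cerveau--Lins Neto components, and I would finish by exhibiting the normal forms of \autoref{tab:p3quant} as generic representatives lying in distinct components: the logarithmic families $\bL(1,1,1,1)$ and $\bL(1,1,2)$, the pullback families $\bR(2,2)$ and $\bR(1,3)$, the family $\bS(2,3)$, and the exceptional family $\E(3)$ (illustrated in \autoref{fig:CLN}). I expect the main obstacle to be the dictionary rather than the invocation of Cerveau--Lins Neto: one must verify that the correspondence is a genuine isomorphism of schemes so that components match — in particular that the unimodular lift $U\to H^0(\PP[3],\forms[1]{\PP[3]}(4))$ is injective, and that no components are created or lost along the locus where $\omega$ degenerates in degree — and then identify the exceptional family $\E(3)$ with an explicit Poisson normal form, which is the delicate case since it is the one family arising neither from logarithmic forms nor from pullbacks.
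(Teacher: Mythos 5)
Your strategy coincides with the paper's: both proofs consist of the dictionary between unimodular quadratic Poisson structures on $\CC^4$ and integrable sections of $\forms[1]{\PP[3]}(4)$, followed by an appeal to the Cerveau--Lins Neto classification. Your derivation of the dictionary is correct---the identities $d(\iota_\ps\mu)=-\iota_{\phi_\ps}\mu$, the reduction of the Jacobi identity to $\mathrm{Pf}(\ps)=0$ on the unimodular locus, $d\omega=4\,\iota_\ps\mu$, and the resulting equivalence with Frobenius integrability all check out, as does the identification of the $45$-dimensional spaces---though the paper does not rederive any of this: it simply cites the known correspondence (Bondal, Polishchuk, Xu) and the uniqueness of the unimodular lift.

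The genuine gap is in the step you expected to be harmless, namely the invocation of Cerveau--Lins Neto itself. Their theorem \cite{Cerveau1996} classifies the irreducible components of the space of degree-two \emph{foliations}, i.e.\ integrable sections whose singular set has codimension at least two. Your dictionary identifies the space of unimodular quadratic Poisson structures with the space of \emph{all} integrable sections of $\forms[1]{\PP[3]}(4)$, and this is strictly larger: a unimodular Poisson structure can correspond to a form $\omega=f\beta$ vanishing on a divisor (for instance $f$ linear and $\beta$ an integrable section of $\forms[1]{\PP[3]}(3)$). Such degenerate forms constitute positive-dimensional families which a priori could be irreducible components invisible to the Cerveau--Lins Neto list; your parenthetical ``no components are created or lost along the locus where $\omega$ degenerates in degree'' is precisely this issue, but it is not a routine scheme-theoretic verification---it is a substantive theorem. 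The paper closes exactly this gap by citing Loray, Pereira and Touzet \cite{Loray2013}, who prove that all integrable sections with divisorial singular locus lie in the closures of the six Cerveau--Lins Neto families. Without that input (or an equivalent argument), your proof establishes only that the variety has \emph{at least} six irreducible components, not exactly six.
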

In \autoref{sec:cmpts}, we give explicit formulae for the generic Poisson brackets in each component. This classification of Poisson brackets is a consequence of the celebrated work of Cerveau and Lins Neto~\cite{Cerveau1996} regarding codimension-one foliations on $\PP[3]$ and the recent refinements of Loray, Pereira and Touzet~\cite{Loray2013}, together with the well-known correspondence between homogeneous quadratic Poisson structures and Poisson structures on projective space~\cite{Bondal1993,Polishchuk1997}, which we review.  The Poisson structures on $\PP[3]$ corresponding to the algebras in \autoref{tab:p3quant} are illustrated in \autoref{fig:CLN}.

\begin{table}[t]\setlength{\extrarowheight}{3pt}
\caption{Generic graded Calabi--Yau deformations of $\CC[x_0,x_1,x_2,x_3]$}\label{tab:p3quant}
\begin{tabular}{c|c|c|p{5.5cm}}
Type		& Orbit & Relations					&	Description \\
		& dimension & &\\
\hline \hline
$\bL(1,1,1,1)$	& 14		& \eqref{eqn:l1111-rels}	& A skew-polynomial ring that is Calabi--Yau~\cite[Example 5.5]{Reyes2013} \\
\hline
$\bL(1,1,2)$	& 17		& \eqref{eqn:l112-rels}		& An algebra from~\cite[Theorem 1.1]{Cassidy2006} that is Calabi--Yau \\ \hline
$\R(2,2)$		& 16		& \eqref{eqn:r22-rels}		& A four-dimensional Sklyanin algebra~\cite{Bocklandt2010,Sklyanin1982,Smith1992} \\ \hline
$\R(1,3)$		& 21		& \eqref{eqn:r13-rels}		& A central extension of a three-dimensional Sklyanin algebra~\cite{LeBruyn1996} \\ \hline
$\bS(2,3)$		& 17		& \eqref{eqn:s23-rels}		& An Ore extension of $\CC[x_0,x_1,x_2]$ by a divergence-free derivation \\ \hline
$\E(3)$			& 13			& \eqref{eqn:e3-rels}		& The quantization of the third symmetric power of $\PP[1]$
\end{tabular}
\end{table}

\CLNimage

As indicated, most of the algebras in \autoref{tab:p3quant} are familiar from the literature.  Remarkably, in order to obtain the full classification, only one more family---namely $\E(3)$---is required.  The generic algebras in the family $\E(3)$ are all isomorphic, and can be obtained using the universal deformation formula of Coll, Gerstenhaber and Giaquinto~\cite{Coll1989} associated with actions of the two-dimensional nonabelian Lie algebra.  Therefore, as we explain in \autoref{sec:schwarz}, we can view the quantization through the lens of equivariant geometry.  We find that the corresponding quantum $\PP[3]$ contains three commutative rational curves---a line, a plane conic, and a twisted cubic---corresponding to various embeddings of $\PP[1]$ in its symmetric power.  In addition to these curves, one also finds a pencil of noncommutative sextic surfaces whose classical limits are the level sets of the $j$-invariant.  Finally, the Schwarzenberger bundles~\cite{Schwarzenberger1961}, which served as early examples of indecomposable vector bundles on projective space, quantize to give graded bimodules over the $\E(3)$ algebra.
\\
 
\emph{Outline:} The paper is organized as follows.  In \autoref{sec:reduction}, we review some basic facts about deformations of quadratic algebras and their semiclassical limits with a view towards noncommutative projective geometry.  We discuss Calabi--Yau algebras, their superpotentials, their role as unique homogeneous coordinate rings for quantum $\PP[n]$s, and their connection with the unimodularity condition for Poisson structures.  Most of this discussion consists of collecting results that are already in the literature, but we hope that the presentation may yet be of some value.  With these ideas in place, all that remains in order to complete the proof of \autoref{thm:qa-cmpts} is to give normal forms for the algebras in \autoref{tab:p3quant} and compare their semiclassical limits with the foliations classified by Cerveau and Lins Neto.  This task is the subject of \autoref{sec:cmpts}.  We close the paper in \autoref{sec:schwarz} with a brief discussion of the equivariant geometry of the $\E(3)$ algebra and related examples.
\\ 

\emph{Acknowledgements:} The author wishes to thank Ragnar-Olaf Buchweitz, Pavel Etingof, Nigel Hitchin, Daniel Rogalski, Travis Schedler, Toby Stafford, Michel Van den Bergh and Chelsea Walton for helpful discussions.  Special thanks are due to Marco Gualtieri, who suggested the problem and provided much support and encouragement as the author's thesis advisor; Colin Ingalls, who provided helpful commentary on the related chapter in the author's PhD thesis~\cite{Pym2013}; and Raf Bocklandt, who shared \texttt{GAP}~\cite{GAP4} code that was used to compute some of the superpotentials presented here.  Further computations were done using \texttt{Sage}~\cite{Sage}, and the three-dimensional renderings in \autoref{fig:CLN} were produced using \texttt{surfex}~\cite{Holzer2008}.

At various stages, this work was supported by an NSERC Canada Graduate Scholarship (Doctoral), the University of Toronto, McGill University and EPSRC Grant EP/K033654/1.  Significant advances occurred during visits to the Mathematical Sciences Research Institute for workshops associated with the program on Noncommutative Algebraic Geometry and Representation Theory.  The author thanks the MSRI and the organizers of the workshops for their support and hospitality.

\section{Reduction to the semi-classical limits}
\label{sec:reduction}

\subsection{Quadratic deformations}

It is common in noncommutative projective geometry (see, e.g., \cite{Artin1990,Artin1994,Stafford2001}) to describe quantum versions of projective space and other projective varieties by their homogeneous coordinate rings.  Just as classical projective space $\PP[n]$ corresponds to the polynomial ring $\CC[x_0,\ldots,x_n]$, graded so that the degree of each generator $x_i$ is equal to one, a quantum projective space should be described by a noncommutative graded algebra $\A = \bigoplus_{k \ge 0} \A_k$ that resembles a polynomial ring as much as possible.  We focus in this paper on the case when $\A$ arises as a deformation of the usual product on the polynomial ring, following the approach of Bondal~\cite{Bondal1993} and Kontsevich~\cite{Kontsevich2003}.

We begin by viewing the polynomial ring as a quotient of the free algebra by a two-sided ideal:
\[
\CC[x_0,\ldots,x_n] = \frac{\CC\abrac{x_0,\ldots,x_n}}{(x_ix_j-x_jx_i)_{0 \le i <j \le n}}
\]
The key point is that the ideal of relations is generated by $r= {n+1\choose 2}$ homogeneous quadratic expressions in $x_0,\ldots,x_n$ that express the commutativity of the variables.  More invariantly, we may consider a vector space $\V$ and present its symmetric algebra as a quotient
\[
\sym{\V} = \tens{\V}/(\ext[2]{\V}).
\]
of the tensor algebra by the ideal generated by the skew-symmetric two-tensors.

We therefore construct quantizations of the homogeneous coordinate ring by deforming the subspace $\ext[2]{\V} \subset \V \otimes \V$ to a new element $\R \in \Gr{r,\V\otimes \V}$ of the relevant Grassmanian.  In other words, we deform $\A$ as a \defn{quadratic algebra}.  Such a deformation will always produce a new graded algebra $\A = \tens{\V}/(\R) = \bigoplus_{k \ge 0} \A_k$ with scalars $\A_0 = \CC$ and generators $\A_1 = \V$, but we wish to select the deformations that have the same ``size'' as the polynomial ring, in the sense that the Hilbert series $H_\A(t) = \bigoplus_{k \ge 0}(\dim \A_k)t^k$ is equal to $(1-t)^{-n-1}$.

\begin{example}
Choose constants $q_{ij}\in \CC^\times$ for $0 \le i < j \le n$.  We can then form the \defn{skew polynomial ring}
\[
\A = \frac{\CC\abrac{x_0,\ldots,x_n}}{(x_ix_j-q_{ij}x_jx_i)_{0\le i<j\le n}}.
\]
The monomials of the form $x_0^{k_0}\cdots x_n^{k_n}$ clearly form a basis for $\A$, and hence $\A$ has the same Hilbert series as the polynomial ring.\qed
\end{example}

If we choose an integer $K \ge 0$ and look at algebras for which $H_\A(t) = (1-t)^{-n-1} \mod t^{K+1}$ (i.e. algebras for which the dimensions are correct up to degree $K$), we are imposing a collection of algebraic equations on the coefficients in the relations, and so we obtain a closed subscheme
\[
\QA(\V,K) \subset \Gr{r,\V\otimes \V}.
\]
The set of all quadratic algebras with the correct Hilbert series is therefore
\[
\QA(\V) = \bigcap_{K \ge 3} \QA(\V,K).
\]
This set is obviously non-empty because it contains the point $Comm_\V = \ext[2]{\V} \in \QA(\V)$ that corresponds to the polynomial ring.  A result of \cite{Drinfeld1992}, known as the Koszul deformation principle (see also \cite[Theorem 2.1]{Polishchuk2005}), implies that for every $K \ge 3$, the subvariety $\QA(\V,K) \subset \QA(\V,3)$ contains an open neighbourhood of $Comm_\V$ in $\QA(\V,3)$.  Hence the germ of $\QA(\V,3)$ at the point $Comm_\V$ can be viewed as the moduli space parametrizing germs of deformations of $\sym{\V}$ as a Koszul quadratic algebra with Hilbert series $(1-t)^{-n-1}$ and fixed vector space $\V$ of generators~\cite{Bondal1993,Kontsevich2001}.  Since $\sym{\V}$ is the ring of functions on the dual vector space $\V^*$, we can think of this germ as parametrizing quantum deformations of the variety $\V^*$ that are compatible with the action of $\CC^\times$ by rescaling.
\begin{definition}
The germ $\QDef(\V)$ of $\QA(\V,3)$ at $Comm_\V$ is the \defn{space of homogeneous quantum deformations of $\V^*$}.
\end{definition}

If $\star_\hbar$ is a family of products on $\CC[x_0,\ldots,x_n]$ that specializes to the usual product when $\hbar = 0$, we can take the semiclassical limit
\[
\{f,g\} = \left.\frac{d}{d\hbar}\right|_{\hbar = 0} (f \star_\hbar g - g \star_\hbar f)
\]
to obtain a Poisson bracket.  Similarly, for quadratic deformations as above, the semiclassical limits of the relations give rise to quadratic Poisson brackets:
\[
\{x_i,x_j\} = \sum_{k,l=0}^n c^{kl}_{ij}x_kx_l
\]
for constants $c^{kl}_{ij} \in \CC$.  Invariantly, the Poisson structure is determined by a linear map $\ext[2]{\V} \to \sym[2]{\V}$ that specifies the bracket on generators.  The Jacobi identity imposes a set of homogeneous quadratic equations on the coefficients $c^{kl}_{ij}$, and so the space of quadratic Poisson structures is given by a subscheme
\[
\PB(\V) \subset \ext[2]{\V^*} \otimes \sym[2]{\V} 
\]
that is invariant under rescaling, i.e.~a cone.

The process of taking semi-classical limits identifies $\PB(\V)$ with the tangent cone of $\QDef(\V)$~\cite{Bondal1993}.  Kontsevich has proved as a consequence of his formality theorem that this tangent cone contains all of the information necessary for a complete description of $\QDef(\V)$:
\begin{theorem}[{\cite[Proposition 6]{Kontsevich2001}}]\label{thm:kont-iso} The identification of $\PB(\V)$ with the tangent cone of $\QDef(\V)$ extends to an isomorphism $Germ(\PB(\V),0) \cong \QDef(\V)$ of analytic germs.
\end{theorem}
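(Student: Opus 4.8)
The plan is to deduce the statement from Kontsevich's formality theorem, by viewing both the Poisson and the associative deformation problems through their controlling differential graded Lie algebras and exhibiting a single $L_\infty$-quasi-isomorphism that matches their moduli. On the commutative side, deformations of $\sym{\V}$ as an associative algebra are governed by its Hochschild complex, i.e.\ the DGLA $D_{\mathrm{poly}}(\V^*)$ of polydifferential operators with the Gerstenhaber bracket and Hochschild differential, whose Maurer--Cartan elements are the star products $f \star_\hbar g = fg + \hbar B_1(f,g) + \cdots$. On the Poisson side the relevant DGLA is the space $T_{\mathrm{poly}}(\V^*)$ of polyvector fields with zero differential and Schouten--Nijenhuis bracket, whose Maurer--Cartan equation $[\pi,\pi]=0$ is exactly the Jacobi identity, so that its Maurer--Cartan elements are the Poisson bivectors. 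The identification of $\PB(\V)$ with the tangent cone, already recorded above, says precisely that the linear term of the correspondence is an isomorphism; the content of the theorem is that this extends to all orders, which is what the higher Taylor coefficients of Kontsevich's morphism supply.

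First I would invoke Kontsevich's formality $L_\infty$-quasi-isomorphism $\mathcal{U}\colon T_{\mathrm{poly}}(\V^*) \to D_{\mathrm{poly}}(\V^*)$, whose first Taylor coefficient is the Hochschild--Kostant--Rosenberg map. A quasi-isomorphism of $L_\infty$-algebras induces an isomorphism of the associated deformation functors, hence a bijection between gauge-equivalence classes of Maurer--Cartan elements on the two sides. Concretely, $\mathcal{U}$ sends a Poisson bivector $\pi$ to the star product $\sum_{n}\tfrac{1}{n!}\mathcal{U}_n(\pi,\ldots,\pi)$ and descends to an isomorphism of the formal moduli germs, compatibly with the tangent-cone identification above.

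The next step is to impose homogeneity and cut down to the quadratic problem. Kontsevich's maps $\mathcal{U}_n$ are given by universal, graph-weighted differential-operator formulas that are $\GL{\V}$-equivariant and preserve the internal weight grading coming from the $\CC^\times$-action $x \mapsto tx$ (which assigns $\partial$ weight $-1$). Under this grading a quadratic bivector $\sum c^{kl}_{ij}x_kx_l\,\partial_i\wedge\partial_j$ has weight $0$, and the graph formulas then produce only weight-$0$ bidifferential operators $B_n$, so that the resulting star product preserves the polynomial grading. Thus $\mathcal{U}$ restricts to a bijection between weight-$0$ Poisson bivectors, which are exactly the points of $\PB(\V)$, and graded associative deformations, which by the Koszul deformation principle cited above are exactly the points of $\QDef(\V)$.

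The main obstacle, and the last step, is to pass from the formal to the analytic germ. A priori the formality morphism produces only a power series $x_i \star x_j = x_ix_j + \sum_{n\ge1}\tfrac{1}{n!}\mathcal{U}_n(\pi,\ldots,\pi)(x_i,x_j)$ with values in the finite-dimensional space $\sym[2]{\V} = \A_2$. Rescaling $\pi \mapsto \epsilon\pi$ and using weight-$0$ homogeneity turns this into an honest power series in $\epsilon$, and Kontsevich's estimates on the graph weights guarantee a positive radius of convergence. Hence for $\pi$ in a neighbourhood of $0$ the star product converges, the deformed relations $\R_\pi \subset \V\otimes\V$ depend analytically on $\pi$, and $\pi \mapsto \R_\pi$ realizes the claimed isomorphism $Germ(\PB(\V),0) \cong \QDef(\V)$. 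The delicate points are precisely this convergence estimate, together with the interaction of the cohomological and weight gradings, and the verification via the Koszul deformation principle that $\R_\pi$ carries the correct Hilbert series so that its image lands in $\QDef(\V)$ rather than in some larger space of graded deformations.
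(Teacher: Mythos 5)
Your argument tracks Kontsevich's own reasoning almost exactly up to its last step: formality gives an $L_\infty$-quasi-isomorphism whose Taylor coefficients are $\GL{\V}$-equivariant and compatible with the internal weight grading, so a quadratic Poisson bivector is sent to a graded star product, and the Koszul deformation principle identifies the target with $\QDef(\V)$. What that yields, however, is an isomorphism of \emph{formal} neighbourhoods --- which is precisely what \cite[Proposition 6]{Kontsevich2001} asserts. The gap is your final step: the claim that ``Kontsevich's estimates on the graph weights guarantee a positive radius of convergence'' is not a known result. No such estimates exist. Convergence of the series $\sum_n \tfrac{\epsilon^n}{n!}\,\mathcal{U}_n(\pi,\ldots,\pi)$, even in the weight-zero setting where each coefficient lies in the finite-dimensional space $\sym[2]{\V}$, is exactly the conjecture Kontsevich formulates in that paper (that his formal isomorphism is actually analytic), and its status remains unclear. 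As written, your proof assumes this open conjecture.

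The paper closes the gap by a different and weaker---but sufficient---argument. Both $Germ(\PB(\V),0)$ and $\QDef(\V)$ are germs of analytic (indeed algebraic) spaces, and Kontsevich's construction shows they are formally isomorphic; Artin's approximation theorem \cite{Artin1969} then implies that formally isomorphic analytic germs are analytically isomorphic, and the approximating analytic isomorphism can be chosen to agree with the formal one to first order, so it still extends the tangent-cone identification of $\PB(\V)$ with the tangent cone of $\QDef(\V)$. The price is that one only obtains the \emph{existence} of some analytic isomorphism, which may differ from Kontsevich's formal one --- no convergence statement about his particular series is made or needed. If you replace your convergence step with this appeal to Artin approximation, the rest of your argument stands.
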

In fact, Kontsevich works with formal neighbourhoods rather than analytic germs, and conjectures that the formal isomorphism he constructs is actually analytic.  The status of this conjecture is unclear, but by combining Kontsevich's formal isomorphism with Artin's approximation theorem~\cite{Artin1969}, we can deduce that there exists some analytic isomorphism as in the theorem.  This analytic isomorphism may, in principle, be different from the formal isomorphism constructed by Kontsevich.

\begin{corollary}\label{thm:cmpt-bij}
The process of taking semiclassical limits gives a bijection between the set of irreducible components of $\QDef(\V)$ and those of $\PB(\V)$.
\end{corollary}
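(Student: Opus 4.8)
The plan is to combine the analytic germ isomorphism of \autoref{thm:kont-iso} with the special structure of $\PB(\V)$ as a cone. The isomorphism $Germ(\PB(\V),0) \cong \QDef(\V)$ identifies the two sides as analytic germs, and since the decomposition of a germ into irreducible components is an invariant of its isomorphism class (equivalently, the minimal primes of the associated analytic local ring are intrinsic), this immediately yields a bijection between the irreducible components of the germ $\QDef(\V)$ and those of the germ $Germ(\PB(\V),0)$. Moreover, because \autoref{thm:kont-iso} asserts that this isomorphism \emph{extends} the tangent-cone identification furnished by the semiclassical limit, the resulting bijection is precisely the one induced by taking semiclassical limits. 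So the whole content of the corollary reduces to the purely local-to-global statement that the irreducible components of the germ $Germ(\PB(\V),0)$ are in natural bijection with the irreducible components of the global affine cone $\PB(\V)$.

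To establish that, I would first note that $\PB(\V)$ is cut out by homogeneous equations in $\ext[2]{\V^*}\otimes\sym[2]{\V}$ (the quadratic Jacobi relations on the coefficients $c^{kl}_{ij}$), so it is a genuine cone: it is preserved by the $\CC^\times$-action by rescaling. Since $\CC^\times$ is connected, it must fix each of the finitely many irreducible components of $\PB(\V)$; hence every component is itself $\CC^\times$-invariant, that is, a subcone, and in particular passes through the origin $0$. Thus each global component contributes at least one analytic branch to the germ at $0$, and, conversely, every branch of the germ is contained in, and so arises from, a unique global component.

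The crux is to show that each irreducible component contributes exactly \emph{one} branch, i.e.\ that an irreducible cone is analytically irreducible (unibranch) at its vertex. For this I would pass to coordinate rings: writing a global component as $\mathrm{Spec}\,R$ with $R$ a graded domain (the grading being by the rescaling weight, so that $R_0=\CC$ and $R$ is generated in degree one), the number of branches of the germ at $0$ equals the number of minimal primes of the completion $\widehat R$ of $R$ at its irrelevant maximal ideal. One then checks directly that $\widehat R$ is again a domain: an element of $\widehat R$ is a formal series $\sum_{d\ge d_0} f_d$ of homogeneous pieces, and the product of two nonzero such series has lowest-degree term equal to the product of the two nonzero leading homogeneous terms, which is nonzero because $R$ is a domain. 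Hence $\widehat R$ has a single minimal prime, the germ of each global component at $0$ is irreducible, and distinct components yield distinct branches. This unibranch step is the only real obstacle; once it is in hand, the previous paragraph gives the bijection between germ components and global components, and composing with the identification coming from \autoref{thm:kont-iso} completes the proof.
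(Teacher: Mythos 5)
Your proposal is correct and takes essentially the same route as the paper: invoke the germ isomorphism of \autoref{thm:kont-iso}, then reduce to the fact that, because $\PB(\V)$ is a cone with vertex $0$, the irreducible components of its germ at $0$ are in bijection with the irreducible components of $\PB(\V)$ itself. The only difference is one of detail: the paper merely asserts this cone fact, whereas you supply its proof (each component is a subcone through the origin, and an irreducible cone is unibranch at its vertex because the completion of a graded domain at the irrelevant maximal ideal is again a domain), which is a welcome elaboration rather than a different argument.
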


\begin{proof}
The statement is true at the level of germs.  It therefore suffices to observe that because $\PB(\V)$ is a cone with vertex the zero Poisson structure, the irreducible components of the germ of $\PB(\V)$ at 0 are in bijection with the irreducible components of $\PB(\V)$ itself.
\end{proof}

\subsection{Descent to projective space and the Calabi--Yau condition}
\label{sec:cy}
The homogeneous coordinate ring of a projective variety depends on the ample line bundle used to embed it in projective space.  For projective space itself, there is a natural choice given by the ample generator of the Picard group, and the result is the polynomial ring.

However, in the noncommutative setting, there is more room for ambiguity.  Namely, because there is no underlying topological space in noncommutative geometry, it is generally understood that a quantum projective variety is best described by its abelian category of quasi-coherent sheaves~\cite{Artin1994}.  This category can be obtained as a quotient of the category of graded left (or right) modules over the homogeneous coordinate ring.  Since different rings may have equivalent module categories, it is perfectly possible, and indeed common, for two graded rings $\A$ to describe the same quantum projective variety $\Proj{\A}$.

This extra equivalence relation is (essentially) implemented by the formation of so-called \defn{Zhang twists}~\cite{Zhang1996}, in which a graded algebra $\A = \bigoplus_{k \ge 0} \A_k$ is twisted by an automorphism $\sigma : \A \to \A$ to obtain a new algebra $\A^\sigma$ whose underlying graded vector space is the same.  The new product $\bullet_\sigma$ is given by the formula
\[
f \bullet_\sigma g = f \sigma^k(g)
\]
for $f \in \A_k$ and $g \in \A$, and a similar formula allows one to twist any graded $\A$-module.  Thus, the process of twisting does not change the category of graded modules and so $\Proj{\A}$ and $\Proj{\A^\sigma}$ are equivalent as quantum varieties.
\begin{example}
The automorphism $\sigma$ of $\CC[x,y]$ that sends $x$ to itself and $y$ to $y-x$ produces the Zhang twist
\[
\CC[x,y]^\sigma \cong \CC\abrac{x,y}/(xy-yx+x^2),
\]
commonly known as the Jordan plane.  Even though this algebra is noncommutative, it should still be viewed as a homogeneous coordinate ring for the classical projective line $\PP[1]$.\qed
\end{example}

In order to remove this ambiguity, one can use the notion of a $\ZZ$-algebra~\cite{Bondal1993a}, or put further constraints on the algebra $\A$.  We do so here using the notion of a Calabi--Yau algebra~\cite{Ginzburg2006} and its twisted version.  This condition is homological in nature and closely related to noncommutative Poincar\'e duality.  Since we work with graded rings, the twisted Calabi--Yau condition is equivalent to AS regularity~\cite[Lemma 1.2]{Reyes2013}, but the (untwisted) Calabi--Yau condition is strictly stronger.  We will not need the precise formulation here; we simply require the
\begin{theorem}[{\cite{Bocklandt2010,Dubois-Violette2005,Dubois-Violette2007}}]
A Koszul algebra $\A$ is Calabi--Yau (respectively, twisted Calabi--Yau) if and only if it is derived from a superpotential (resp., twisted superpotential) whose associated complex is exact.
\end{theorem}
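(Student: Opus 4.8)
The plan is to identify both conditions with a single piece of data: the self-duality of the bimodule Koszul resolution of $\A$. Since $\A = \tens{\V}/(\R)$ is assumed Koszul, its diagonal bimodule (with enveloping algebra $\A^{e} = \A \otimes \A^{\mathrm{op}}$) admits the minimal free resolution
\[
\cdots \to \A \otimes K_i \otimes \A \to \cdots \to \A\otimes K_1 \otimes \A \to \A \otimes K_0 \otimes \A \to \A \to 0,
\]
where $K_0 = \CC$, $K_1 = \V$, $K_2 = \R$, and in general $K_i = \bigcap_{a+b = i-2} \V^{\otimes a}\otimes \R \otimes \V^{\otimes b} \subseteq \V^{\otimes i}$, the differential being assembled from the two inclusions $K_i \hookrightarrow \V \otimes K_{i-1}$ and $K_i \hookrightarrow K_{i-1}\otimes \V$ followed by multiplication into the outer copies of $\A$. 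Koszulity is exactly the exactness of this complex, and homological smoothness (required by the Calabi--Yau condition) amounts to $K_i = 0$ for $i$ large, with the top surviving term in the global dimension $n$.

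Next I would dualize. Applying $\mathrm{Hom}_{\A^e}(-,\A^e)$ termwise identifies $(\A\otimes K_i \otimes \A)^\vee$ with $\A \otimes K_i^{*} \otimes \A$, so the complex computing $\mathrm{Ext}^{\bullet}_{\A^e}(\A,\A^e)$ is again built from the spaces $K_i^{*}$ — equivalently, these are the graded pieces of the Koszul dual $\A^{!}$. The (twisted) Calabi--Yau condition asserts an isomorphism $\A \cong \mathrm{Ext}^{n}_{\A^e}(\A,\A^e)$, twisted by an automorphism $\nu$ in the twisted case, which at the chain level forces an isomorphism between this dual complex and the original one, reversed and shifted by $n$. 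Comparing terms yields $K_i^{*} \cong K_{n-i}$ for all $i$; in particular $K_n^{*} \cong K_0 = \CC$, so $\dim K_n = 1$. This is the reformulation that $\A^{!}$ is a (twisted) Frobenius algebra.

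I would then extract the superpotential as a generator $w$ of the line $K_n \subseteq \V^{\otimes n}$. The content of the theorem is that the chain-level self-duality above is equivalent to two concrete statements about $w$: that $w$ is cyclically invariant up to the twist $\nu$ (which is the Nakayama automorphism), and that the relation space is recovered as the span of its iterated contractions, $\R = \partial^{(n-2)}w$, so that $\A$ is precisely the algebra derived from $w$ and the displayed resolution is its associated superpotential complex, exact by Koszulity. For the converse, given a (twisted) superpotential whose associated complex is exact, that complex is the bimodule Koszul complex of the derived algebra; its exactness furnishes both Koszulity and a finite resolution, and the self-duality built into the superpotential construction then supplies the (twisted) Calabi--Yau isomorphism.

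The main obstacle is the equivalence in the third step: converting the abstract isomorphism of dual complexes into the statements that $w$ is cyclically twist-invariant and that the relations are its contractions. This requires choosing the termwise isomorphisms $K_i^{*} \cong K_{n-i}$ compatibly with all the differentials simultaneously — not merely degreewise — and tracking signs and the action of the Nakayama automorphism so that the cyclic symmetry of $w$ carries exactly the twist appearing in the Calabi--Yau condition. The untwisted case is then the specialization $\nu = \mathrm{id}$, where $w$ is genuinely cyclically invariant.
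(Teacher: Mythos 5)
The paper itself offers no proof of this statement: it is quoted as a black box from the cited references (Bocklandt--Schedler--Wemyss and Dubois-Violette), and the text explicitly says the precise formulation will not be needed. So the only meaningful comparison is against those sources, and your sketch reconstructs exactly their argument: Koszulity gives the minimal bimodule resolution with terms $\A\otimes K_i\otimes \A$, $K_i=\bigcap_{a+b=i-2}\V^{\otimes a}\otimes\R\otimes\V^{\otimes b}$; the (twisted) Calabi--Yau condition is converted into self-duality of that resolution, equivalently the (twisted) Frobenius property of the Koszul dual; and the superpotential is a generator $w$ of the one-dimensional top piece $K_n$, with the relations recovered as $\R=\partial^{n-2}w$ by nondegeneracy of the Frobenius pairing. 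This is the same route as the literature the paper points to, not a different one.

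The step you flag but leave open --- upgrading the cohomological statement $\mathrm{Ext}^i_{\A^e}(\A,\A^e)=0$ for $i\ne n$ and $\cong\A$ (twisted) for $i=n$ to a chain-level isomorphism between the dual complex and the original one --- is indeed the crux, and it closes in the standard way, which you should spell out: the dual complex $\mathrm{Hom}_{\A^e}(\A\otimes K_\bullet\otimes\A,\,\A^e)$ consists of finitely generated free bimodules and is again \emph{minimal}, because the differentials of the Koszul bimodule complex have coefficients in $\V\subset\A_{\ge 1}$ and dualizing preserves this; hence, once its cohomology is known to be concentrated in degree $n$ and equal to the (twisted) diagonal bimodule, it is itself a minimal free resolution of that bimodule, and uniqueness of minimal graded free resolutions over connected graded algebras supplies the termwise isomorphisms $K_i^*\cong K_{n-i}$ compatibly with all differentials at once. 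Tracking the outer bimodule structure through this identification is what produces the Nakayama automorphism as the twist and the signed cyclic symmetry of $w$. With that point made explicit, your outline is correct, including the converse direction, where the exact superpotential complex is a finite self-dual free resolution and Van den Bergh's criterion yields the (twisted) Calabi--Yau property.
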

For our purposes, a \defn{superpotential} on a vector space $\V$ of dimension $d$ will be an element $\Phi \in \V^{\otimes d}$  such that $cyc(\Phi) = (-1)^d \Phi$, where $cyc: \V^{\otimes d} \to \V^{\otimes d}$ is the linear automorphism that cyclically permutes the tensor factors.  Similarly, a twisted superpotential $\Phi \in \V^{\otimes d}$ satisfies $cyc(\Phi) = (-1)^d (Q\otimes 1 \otimes \cdots \otimes 1)\Phi$ for some linear automorphism $Q \in \GL{\V}$.  More generally, one can consider superpotentials that are tensors of other degrees, but we shall not do so here.

From a (possibly twisted) superpotential $\Phi$, we obtain a quadratic algebra by ``differentiating'' as follows: denote by $\partial^k\Phi \subset \V^{\otimes(d-k)}$ the subspace given by contracting the first $k$ factors of $\Phi$ with $(\V^*)^{\otimes k}$.  Then $\partial^{d-2}\Phi$ gives a subspace of $\V\otimes \V$, and hence gives the relations for a quadratic algebra $\A$.  The tensor $\Phi$ should be thought of as a volume form on the noncommutative affine variety corresponding to $\A$.  Using the rest of the subspaces $\partial^k\Phi$, which serve as differential forms of lower degree, one obtains the complex of $\A$-bimodules whose exactness is mentioned in the theorem.

\begin{example}[{\cite{Ginzburg2006}}]
For the case $\V \cong \CC^3$ with basis $x,y,z$, consider the tensor
\[
\Phi = a(xyz+yzx+zxy) + b(xzy+zyx+yxz) + c(x^3+y^3+z^3),
\]
where we have omitted the symbol $\otimes$ for the tensor product.  The contraction with the dual basis vector $\cvf{x}$ in the first factor gives the ``derivative''
\[
\cvf{x}\Phi = ayz+bzy + cz^2,
\]
and similarly for $\cvf{y}$ and $\cvf{z}$.  The resulting algebra is the well-known three-dimensional Skylanin algebra 
\[
\A = \frac{\CC\abrac{x,y,z}}{\begin{pmatrix}
axy+byx+cz^2, \\ ayz+nzy+cx^2 , \\ azx+bxz+cy^2
\end{pmatrix}}
\]
of~\cite{Artin1990,Artin1987}, which describes a nontrivial family of quantum deformations of $\PP[2]$.\qed
\end{example}

The key fact about twisted superpotentials that allows us to remove the ambiguity in the homogeneous coordinate rings is that, for a Koszul, twisted Calabi--Yau algebra, the automorphism $Q \in \GL{\V}$ in the definition is unique.   Moreover, given a $d$th root $\sigma$ of $Q$, one can perform a Zhang twist by $\sigma$ to replace $Q$ with the identity~\cite[Sections 3 and 7]{Dubois-Violette2007}; see also \cite{Goodman2013,He2013,Reyes2013} for related constructions.  Since any $Q$ sufficiently close to the identity will have a $d$th root that depends analytically on $Q$, it follows that any sufficiently small analytic deformation of the polynomial ring as a Koszul twisted Calabi--Yau algebra can be Zhang twisted to an analytic family of Koszul Calabi--Yau algebras, and that this family of Calabi--Yau algebras is unique up to isomorphism.  We therefore obtain a canonical choice for the homogeneous coordinate rings of the corresponding quantum projective spaces.  As a result, we may restrict our attention to the deformations that are Calabi--Yau:
\begin{definition}
For a vector space $\V$, the \defn{space of quantum deformations of $\PP(\V^*)$} is the closed subgerm $\QDef_0(\V) \subset \QDef(\V)$ parametrizing deformations of the polynomial ring as a Koszul Calabi--Yau algebra, i.e.~deformations that are derived from a superpotential with no twist.
\end{definition}

Thus our main result (\autoref{thm:qa-cmpts}) describes the irreducible components of $\QDef_0(\V)$ in the case when $\dim \V = 4$.

Dolgushev~\cite{Dolgushev2009} has shown that quantization relates the Calabi--Yau condition for noncommutative algebras to the unimodularity condition for Poisson structures.  Similarly, Shoikhet~\cite{Shoikhet2010} has shown that quantization is compatible with Koszul duality.  Since the superpotential for a Koszul Calabi--Yau algebra can be computed as a product in the Koszul dual algebra, we can combine their results with \autoref{thm:kont-iso} and \autoref{thm:cmpt-bij} to deduce the
\begin{proposition}The following statements hold:
\begin{enumerate}
\item The process of taking semiclassical limits identifies the tangent cone of $\QDef_0(\V)$ at $Comm_\V$ with the subscheme $\PB_0(\V) \subset \PB(\V)$ consisting of those quadratic Poisson structures that are unimodular.
\item This identification extends to an isomorphism $Germ(\PB_0(\V),0) \cong \QDef_0(\V)$ of analytic germs.
\item Taking semiclassical limits gives a bijection between the set of irreducible components of $\QDef_0(\V)$ and those of $\PB_0(\V)$.
\end{enumerate}
\end{proposition}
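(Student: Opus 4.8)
The plan is to bootstrap from Kontsevich's germ isomorphism (\autoref{thm:kont-iso}) by keeping track of the additional structure that singles out the Calabi--Yau deformations. The core of the argument is to show that, under the deformation quantization map realizing \autoref{thm:kont-iso}, a quadratic Poisson structure $\pi \in \PB(\V)$ is carried to a Koszul Calabi--Yau algebra exactly when $\pi$ is unimodular. Two inputs combine to give this. First, Shoikhet's compatibility of quantization with Koszul duality ensures that the (possibly twisted) superpotential of the quantized algebra is the quantization of the corresponding datum on the Koszul-dual side; since, by the superpotential criterion recalled above, the Calabi--Yau property of a Koszul algebra is encoded in its superpotential having trivial twist, this lets one read the twist off the Poisson data directly. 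Second, Dolgushev's theorem matches the vanishing of that twist with the unimodularity of $\pi$. Together these identify the preimage of $\QDef_0(\V)$ under the quantization correspondence with $\PB_0(\V)$.

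For statement (1), I would then pass to tangent cones. The tangent cone of $\QDef(\V)$ is $\PB(\V)$, and the tangent cone of the closed subgerm $\QDef_0(\V)$ is the corresponding subcone; under the identification just described this subcone is matched with the unimodular structures. The key simplification here is that unimodularity, expressed as the vanishing of the modular vector field relative to the standard Euler volume form, is a system of \emph{linear} equations in the coefficients $c^{kl}_{ij}$ of $\pi$. Hence $\PB_0(\V)$ is a linear slice of the cone $\PB(\V)$, is itself a cone, and coincides with its own tangent cone at the origin; this makes the identification of the tangent cone of $\QDef_0(\V)$ with $\PB_0(\V)$ clean.

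Statements (2) and (3) follow formally once (1) is in place. For (2), the correspondence of the previous paragraph operates at the level of the formal (and, via \autoref{thm:kont-iso}, analytic) isomorphism and restricts to an isomorphism $Germ(\PB_0(\V),0) \cong \QDef_0(\V)$ of the two closed subgerms. Statement (3) then proceeds exactly as in \autoref{thm:cmpt-bij}: the restricted germ isomorphism yields a bijection between the irreducible components of the two germs, and because $\PB_0(\V)$ is again a cone with vertex the zero Poisson structure, its global irreducible components are in bijection with those of its germ at the origin.

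I expect the main obstacle to lie in the first paragraph, namely in checking that Shoikhet's and Dolgushev's theorems assemble into a \emph{family-wise}, rather than merely pointwise, matching of the Calabi--Yau and unimodular loci, so that the twist automorphism $Q$ of the superpotential is controlled by unimodularity uniformly across the germ. A secondary subtlety, already visible in the discussion preceding \autoref{thm:kont-iso}, is the passage from Kontsevich's formal isomorphism—over which the quantization map gives direct control of the loci—to the analytic isomorphism produced by Artin approximation, which may differ from it; this must be handled by observing that $\QDef_0(\V)$ and $\PB_0(\V)$ are cut out by intrinsic closed conditions, so that the correspondence of loci established on formal completions suffices for the germ-level and tangent-cone statements needed in (1) and (3).
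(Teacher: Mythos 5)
Your proposal follows essentially the same route as the paper: it combines Kontsevich's germ isomorphism (\autoref{thm:kont-iso}) with Dolgushev's matching of the Calabi--Yau and unimodularity conditions and Shoikhet's compatibility of quantization with Koszul duality (through which the superpotential is controlled), then deduces the component bijection by the cone argument of \autoref{thm:cmpt-bij}. The additional points you make explicit---that unimodularity is a linear condition so $\PB_0(\V)$ is itself a cone, and the formal-versus-analytic subtlety handled via Artin approximation---are refinements the paper leaves implicit, not a different method.
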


We recall that a Poisson structure on a manifold $\X$ is \defn{unimodular} if there is a volume form on $\X$ that is invariant under all Hamiltonian flows~\cite{Weinstein1997}.  It is known~\cite{Bondal1993,Liu1992} that a quadratic Poisson structure $\{\cdot,\cdot\} \in \PB(\V)$ can be uniquely decomposed as
\[
\{f,g\} = \{f,g\}_{unim} + Z(f)E(g)-E(f)Z(g)
\]
where $\{\cdot,\cdot\}_{unim}$ is a unimodular quadratic Poisson structure, $E$ is the Euler derivation $E(f) = \sum_{i=0}^n x_i\cvf{x_i}f$ and $Z$ is a homogeneous derivation (a linear vector field) that is an infinitesimal symmetry of $\{\cdot,\cdot\}_{unim}$.

Since a quadratic Poisson structure is invariant under the action of $\CC^\times$ on the manifold $\V^*$, it descends to a Poisson structure on $\PP(\V^*)$.  Conversely, a Poisson structure on projective space can always be lifted to a quadratic Poisson structure~\cite{Bondal1993,Polishchuk1997}.  The decomposition above implies that there is a unique unimodular lift, and that the other lifts differ by a term involving an infinitesimal symmetry $Z$.  Thus $Z$ is the infinitesimal version of the Zhang twist $\sigma$ in the quantum case.  The output of this discussion is the following result, which we shall apply in the next section in order to complete the proof of \autoref{thm:qa-cmpts}:
\begin{theorem}\label{thm:proj-cmpt-bij}
Taking semiclassical limits gives a bijection between the set of irreducible components of $\QDef_0(\V)$ and the set of irreducible components of the variety parametrizing Poisson structures on $\PP(\V^*)$.
\end{theorem}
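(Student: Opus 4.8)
The plan is to deduce the statement from the Proposition immediately preceding it together with an analysis of the descent map sending a quadratic Poisson structure on $\V^*$ to the induced Poisson structure on $\PP(\V^*)$. That Proposition already gives a bijection between the irreducible components of $\QDef_0(\V)$ and those of the scheme $\PB_0(\V)$ of unimodular quadratic Poisson structures, so it suffices to produce a bijection between the irreducible components of $\PB_0(\V)$ and those of the variety of Poisson structures on $\PP(\V^*)$. I will do this by exhibiting the descent map as an isomorphism of varieties $\PB_0(\V) \xrightarrow{\sim} \{\text{Poisson structures on }\PP(\V^*)\}$, after which the matching of components is automatic.

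The first step is to set up the ambient linear algebra. Descent is the restriction of a linear map $d$ on the space $\ext[2]{\V^*}\otimes\sym[2]{\V}$ of quadratic bivectors, and its kernel is exactly the subspace $W = \{Z\wedge E\}$ of Euler-correction terms: such a term descends to zero because the Euler field $E$ annihilates the degree-zero functions pulled back from $\PP(\V^*)$, and conversely a quadratic bivector descending to zero must have a leg along the Euler foliation. On the other hand, the modular vector field of a quadratic Poisson structure (with respect to the standard volume on $\V^*$) is a linear vector field depending linearly on the bivector, so the assignment $\pi \mapsto \pi_{unim}$ extends to a linear projection $p$ of the whole ambient space with image the subspace $U$ of unimodular bivectors; the decomposition recalled above then reads $\ext[2]{\V^*}\otimes\sym[2]{\V} = U \oplus W$. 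Since $U \cap W = 0$ and every bivector field on $\PP(\V^*)$ arises by descent (part of the lifting framework of the cited references), I conclude that $d$ restricts to a linear isomorphism from $U$ onto the space of bivector fields on $\PP(\V^*)$.

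The second step is to check that this linear isomorphism identifies the two Poisson loci. In the forward direction, the descent of a quadratic Poisson structure is a Poisson structure, so $d(\PB_0(\V))$ lands in the variety of Poisson structures on $\PP(\V^*)$. In the reverse direction, given a Poisson structure $\bar\pi$ on $\PP(\V^*)$, the cited lifting results produce a quadratic Poisson lift $\pi$; its unimodular part $\pi_{unim} = p(\pi)$ lies in $\PB_0(\V)\subset U$ and satisfies $d(\pi_{unim}) = d(\pi) = \bar\pi$ because $d$ kills $W$. As $d|_U$ is injective, the unique $U$-preimage of $\bar\pi$ is the Poisson structure $\pi_{unim}$, so it satisfies the Jacobi identity. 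Hence $d|_U$ carries the Jacobi subvariety $\PB_0(\V)$ isomorphically onto the variety of Poisson structures on $\PP(\V^*)$. An isomorphism of varieties induces a bijection of irreducible components, and composing with the bijection from the preceding Proposition yields the theorem.

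I expect the main obstacle to be the second step, namely confirming that the correspondence respects the Jacobi conditions at the level of schemes rather than merely at the level of points. The reverse inclusion is where the real content lies: it is not formal that the canonical unimodular representative of a projective Poisson structure again satisfies the Jacobi identity, and this rests essentially on the cited lifting theorem of Bondal and Polishchuk rather than on the linear-algebra splitting. Subsidiary points that I would verify carefully are the exactness of $\ker d = W$ and the direct-sum decomposition $U\oplus W$ on the full ambient space, both of which follow from the linearity of the modular vector field and from the interpretation of $E$ as the generator of the $\CC^\times$-action.
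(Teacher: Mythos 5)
Your proposal is correct and takes essentially the same route as the paper: the paper also combines part (3) of the preceding Proposition with the observation that descent and the unique unimodular lift (resting on the decomposition of quadratic Poisson structures from Bondal and Liu--Xu together with the Bondal--Polishchuk lifting result) give a bijection between $\PB_0(\V)$ and Poisson structures on $\PP(\V^*)$. Your explicit linear-algebra bookkeeping---the splitting of the ambient space into unimodular bivectors and Euler-correction terms $Z\wedge E$, with the latter being exactly the kernel of descent---simply makes precise what the paper presents as informal discussion before stating the theorem.
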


\section{Deformations of $\PP[3]$}

\label{sec:cmpts}

We now focus our attention on the quantum deformations of $\PP[3]$, i.e.~the case in which the vector space $\V$ of generators of the algebra has dimension four.  Throughout this section $x_0,\ldots,x_3$ will be a basis for $\V$.  In this case, one can show using the Batalin--Vilkovisky formalism~\cite{Polishchuk1997,Xu1999} that any unimodular quadratic Poisson structure may be written uniquely as
\begin{equation}
\{f,g\} = \frac{df\wedge dg \wedge d\alpha}{dx_0\wedge dx_1 \wedge dx_2 \wedge dx_3}\label{eqn:pot-brac}
\end{equation}
for a one-form $\alpha = \alpha_0dx_0 + \alpha_1dx_1+\alpha_2 dx_2 +  \alpha_3 dx_3$ that satisfies the integrability condition $\alpha \wedge d\alpha = 0$, and whose coefficients are homogeneous cubic polynomials satisfying $\sum_{i=0}^3 x_i\alpha_i = 0$.

At the level of the projective space $\PP[3]$, this one-form can be viewed as an integrable section $\alpha' \in \cohlgy[0]{\PP[3],\forms[1]{\PP[3]}(4)}$; the Poisson bivector field on $\PP[3]$ corresponding to $\{\cdot,\cdot\}$ is obtained through the identification $\forms[1]{\PP[3]}(4) \cong \ext[2]{\tshf{\PP[3]}}$ induced by the volume form $dx_0\wedge dx_1\wedge dx_2\wedge dx_3$.  In particular, the symplectic leaves of the Poisson structure on $\PP[3]$ are the leaves of the foliation defined by the kernel of $\alpha'$.  Remarkably, the irreducible components in the space parametrizing such foliations have been completely described in celebrated work of Cerveau and Lins Neto:
\begin{theorem}[\cite{Cerveau1996}]
For $n \ge 3$, the variety parametrizing integrable global sections of $\forms[1]{\PP[n]}(4)$ has exactly six irreducible components, called $\bL(1,1,1,1)$, $\bL(1,1,2)$, $\R(2,2)$, $\R(1,3)$, $\bS(2,3)$ and $\E(3)$.  Moreover, there are explicit formulae for the cubic one-forms that define each component.
\end{theorem}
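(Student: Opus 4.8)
The plan is to establish the two assertions separately: that there are six irreducible families, each of whose closure is a genuine irreducible component of the foliation space, and that these exhaust all components. I identify a degree-two foliation on $\PP[n]$ with a twisted one-form $\omega \in \cohlgy[0]{\PP[n],\forms[1]{\PP[n]}(4)}$ satisfying the descent condition $\iota_E\omega = 0$ and the Frobenius integrability condition $\omega\wedge d\omega = 0$, taken up to scale. Since integrability is quadratic in the coefficients of $\omega$, these foliations form a projective subscheme $\M$ of the projectivization of $\cohlgy[0]{\PP[n],\forms[1]{\PP[n]}(4)}$, and the theorem amounts to describing the irreducible components of $\M$. As a preliminary simplification I would reduce to the case $n=3$ needed here, by restricting a foliation to a generic three-plane $\PP[3]\subset\PP[n]$ (which preserves the degree) and checking that a form whose generic three-plane restrictions all lie in one of the six families must itself lie in that family; conversely each family restricts to its $n=3$ counterpart.

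First I would write down the six candidate families explicitly and verify that each gives a component. The logarithmic families $\bL(1,1,1,1)$ and $\bL(1,1,2)$ consist of the forms obtained by clearing denominators in a closed logarithmic form $\sum_i \lambda_i\, dF_i/F_i$, with the $F_i$ homogeneous of the indicated degrees and $\sum_i \lambda_i\deg F_i = 0$ so that $\omega$ descends; the rational families $\R(2,2)$ and $\R(1,3)$ consist of forms $b\,G\,dF - a\,F\,dG$ with a rational first integral $F^a/G^b$, where $F,G$ are homogeneous of the degrees named in the label (the exponents $a,b$ being fixed by the descent condition); $\bS(2,3)$ is a family of suspension type, built from a divergence-free vector field in one fewer variable; and $\E(3)$ is the single $\GL{4,\CC}$-orbit closure of the exceptional form attached to the action of the two-dimensional nonabelian Lie algebra $\aff{\CC}$. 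Each family is the image of an irreducible parameter space (tuples of homogeneous polynomials with residues, or a homogeneous space), hence is irreducible. To see each is a maximal component, I would compute the Zariski tangent space to $\M$ at a generic member $\omega$ and show it equals the tangent space of the family: first-order integrable deformations $\dot\omega$ satisfy $\dot\omega\wedge d\omega + \omega\wedge d\dot\omega = 0$, and one checks, using Bott's formula for the cohomology groups $\cohlgy[q]{\PP[n],\forms[1]{\PP[n]}(k)}$ together with the stability of the Kupka structure, that every such $\dot\omega$ is tangent to the family. This simultaneously shows $\omega$ is a smooth point and pins down the component through it.

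The hard part is completeness: showing no further components exist. Here I would take an arbitrary irreducible component $\Sigma \subset \M$, a generic member $\omega$, and study its singular scheme $\Sing{\omega}$, concentrating on the Kupka locus, where $\omega$ vanishes but $d\omega$ does not. Along a Kupka component the Kupka--Reeb phenomenon gives a local product normal form, so the foliation is transversally a germ of a plane foliation; because the degree is only two, the admissible transverse types are tightly constrained, and combining this with the global geometry of the Kupka divisor (its cohomology class and normal bundle) and with vanishing theorems forces $\omega$ to integrate globally to a closed logarithmic form or a rational first integral, placing $\omega$ in one of the families $\bL$, $\R$, or $\bS$. The remaining case is when the Kupka locus is empty, that is, when $d\omega$ vanishes along the whole of $\Sing{\omega}$; such forms resist the integration argument and must be handled by a direct normal-form analysis, which I expect to single out precisely the exceptional orbit and thereby produce $\E(3)$.

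I expect the completeness step, and within it the isolation of $\E(3)$, to be the main obstacle. Verifying that the six families are components is a finite, if intricate, cohomological computation, whereas excluding any additional component requires controlling every singular configuration that a degree-two integrable form can exhibit and then carrying out the local-to-global integration of the transverse data. The genuinely surprising point is that one configuration integrates to none of the classical (logarithmic, rational, or suspension) models but instead to the hidden $\aff{\CC}$-symmetry underlying $\E(3)$; it is the recognition and isolation of this exceptional family, rather than any single estimate, that is the essential difficulty. The refinements of Loray, Pereira and Touzet~\cite{Loray2013} cited alongside~\cite{Cerveau1996} are what ultimately secure this degenerate case and the sharp count of exactly six.
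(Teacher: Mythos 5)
You should first be aware that the paper contains no proof of this statement: it is quoted as an external theorem of Cerveau and Lins Neto \cite{Cerveau1996}, with a supplementary role played by Loray, Pereira and Touzet \cite{Loray2013}, so there is no internal argument to compare yours against. Judged on its own terms, your sketch does reproduce the broad architecture of the actual Cerveau--Lins Neto proof: exhibit the logarithmic, rational, pullback and exceptional families; show each is irreducible and that a generic member is a smooth point of the foliation space whose tangent space (computed from the linearized integrability equation, Bott-type vanishing, and persistence of the Kupka singular structure) coincides with that of the family; then prove exhaustiveness by analyzing the singular scheme of an arbitrary integrable form; and deduce the case $n>3$ from $n=3$ by restriction to generic three-planes. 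All of that matches the published strategy.

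The genuine gap is exactly where you flag it, and it is not a small one: the completeness step. The sentence asserting that the Kupka-locus analysis ``forces $\omega$ to integrate globally to a closed logarithmic form or a rational first integral'' is a statement of the desired conclusion, not an argument. Controlling every admissible transverse type in degree two, dealing with the strata where $d\omega$ vanishes on the singular set, and isolating $\E(3)$ constitute the entire substance of Cerveau and Lins Neto's long and delicate paper, and cannot be compressed into a tangent-space computation plus an appeal to vanishing theorems. Your closing attribution is also off: \cite{Loray2013} is not what secures the exceptional component or the sharp count of six --- that is all in \cite{Cerveau1996}. As the paper explains immediately after the theorem, the role of Loray--Pereira--Touzet is to handle integrable forms whose singular set has components of dimension greater than one (a case Cerveau and Lins Neto excluded by hypothesis), by showing that such forms still lie in the closures of the six families; this is what justifies the paraphrase of the theorem as a statement about \emph{all} integrable sections of $\forms[1]{\PP[n]}(4)$. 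So your proposal is a sensible reading plan for the literature, but it is not a proof, and the paper never intended to supply one.
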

Strictly speaking, Cerveau and Lins Neto dealt with the case of foliations whose singular set has no components of dimension larger than one, but recent work by Loray, Pereira and Touzet~\cite{Loray2013} shows that all of the other foliations corresponding to Poisson structures lie in the closures of the families considered by Cerveau and Lins Neto.  Moreover, they described similar results for the other Fano threefolds of Picard rank one.  The components $\R(2,2)$ and $\R(1,3)$ were also examined by Polishchuk~\cite{Polishchuk1997}, where they were identified as giving the only examples of Poisson structures on $\PP[3]$ whose zero locus contains a smooth curve as a connected component.

In what follows, we use the explicit description of the components in the space of foliations to give similar descriptions of the Poisson brackets.  For each component, we select a Zariski open set of suitably generic foliations and put the corresponding one-forms into a normal form.  We then compute the brackets via \eqref{eqn:pot-brac}, obtaining normal forms for the unimodular quadratic Poisson structures.  We therefore have an explicit description of the irreducible components in $\PB_0(\V)$ as per \autoref{thm:pb-cmpts}.

With these normal forms for the Poisson brackets in hand, we are able to find normal forms for the corresponding quantizations, and to deduce that they are Koszul and Calabi--Yau with Hilbert series $(1-t)^{-4}$.  The Koszul condition is easily checked since most of the algebras are already known.  In order to prove that they are Calabi--Yau, we simply exhibit their superpotentials and appeal to the following
\begin{lemma}
Let $\A = \tens{\V}/(\R)$ be a Koszul algebra that  is derived from a superpotential $\Phi$ and has the same Hilbert series as the polynomial ring in four variables.  If $\partial^{3}\Phi = \V$, then $\A$ is Calabi--Yau.
\end{lemma}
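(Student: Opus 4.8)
The plan is to apply the cited characterization of the Calabi--Yau property: since $\A$ is Koszul and is derived from the superpotential $\Phi \in \V^{\otimes 4}$, it remains only to prove that the associated complex of $\A$-bimodules
\[
0 \to \A \otimes \partial^{0}\Phi \otimes \A \to \A \otimes \partial^{1}\Phi \otimes \A \to \A \otimes \partial^{2}\Phi \otimes \A \to \A \otimes \partial^{3}\Phi \otimes \A \to \A \otimes \partial^{4}\Phi \otimes \A \to \A \to 0
\]
is exact, where $\partial^{2}\Phi = \R$ and the final arrow is the multiplication map. I would first recall that, because $\A$ is Koszul, its diagonal bimodule admits the Koszul resolution, whose term in homological degree $j$ is $\A \otimes (\A^{!}_{j})^{*} \otimes \A$, with $\A^{!}$ the Koszul dual algebra. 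The defining cyclic (anti)symmetry of $\Phi$ guarantees the standard inclusions $\partial^{4-j}\Phi \subseteq (\A^{!}_{j})^{*}$, and these assemble into a morphism from the associated complex to the Koszul resolution. As both are complexes of free $\A$-bimodules, exactness of the associated complex follows as soon as each inclusion $\partial^{4-j}\Phi \hookrightarrow (\A^{!}_{j})^{*}$ is an equality.

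The verification is then a matter of matching dimensions. Because $\A$ is Koszul with Hilbert series $(1-t)^{-4}$, its Koszul dual has Hilbert series $(1+t)^{4}$, so $\dim (\A^{!}_{j})^{*} = \binom{4}{j}$, giving the sequence $1,4,6,4,1$. On the superpotential side, $\partial^{2}\Phi = \R$ has dimension $6$ by construction, both $\partial^{4}\Phi = \CC$ and $\partial^{0}\Phi = \langle \Phi \rangle$ are one-dimensional since $\Phi \neq 0$, and $\partial^{3}\Phi = \V$ has dimension $4$ by hypothesis. This accounts for four of the five inclusions; the remaining space $\partial^{1}\Phi \subseteq \V^{\otimes 3}$ must be shown to have dimension $4 = \dim(\A^{!}_{3})^{*}$. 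Once all five inclusions are equalities, the associated complex coincides with the Koszul resolution and is therefore exact, and the theorem yields that $\A$ is Calabi--Yau.

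The crux is thus the single identity $\dim \partial^{1}\Phi = \dim \partial^{3}\Phi$, which I would obtain from the cyclic symmetry of $\Phi$ by a flattening-rank argument. Writing $c_{k}\colon (\V^{*})^{\otimes k} \to \V^{\otimes(4-k)}$ for the contraction of the first $k$ tensor slots of $\Phi$, one has $\partial^{k}\Phi = \image{c_{k}}$, so $\dim \partial^{k}\Phi = \rank{c_{k}}$. The transpose of $c_{k}$ is the contraction of the last $4-k$ slots of $\Phi$, and it has the same rank; applying the relation $cyc(\Phi) = \Phi$ converts contraction of the last $4-k$ slots into contraction of the first $4-k$ slots up to sign, identifying this rank with $\rank{c_{4-k}}$. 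Hence $\dim \partial^{k}\Phi = \dim \partial^{4-k}\Phi$ for all $k$, and in particular $\dim \partial^{1}\Phi = \dim \partial^{3}\Phi = 4$. I expect this cyclic-symmetry duality to be the main obstacle, together with the bookkeeping needed to confirm that the inclusions $\partial^{4-j}\Phi \subseteq (\A^{!}_{j})^{*}$ are indeed the maps appearing in the Koszul resolution; the rest of the argument is a formal dimension count.
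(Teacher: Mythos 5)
Your proof is correct and takes essentially the route the paper itself relies on: the paper's entire proof is a citation of \cite[Proposition 7.1]{Bocklandt2010}, where precisely this comparison of the superpotential complex with the Koszul bimodule resolution---the dimension count $1,4,6,4,1$ coming from $H_{\A^!}(t)=(1+t)^4$, the inclusions $\partial^{4-j}\Phi\subseteq(\A^!_j)^*$, and the equality $\dim\partial^{k}\Phi=\dim\partial^{4-k}\Phi$ forced by cyclic symmetry---is carried out implicitly for the Sklyanin algebras. The only difference is that you make explicit the argument that the paper delegates to the reference.
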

\begin{proof}
See the proof of \cite[Proposition 7.1]{Bocklandt2010}, where the lemma is used implicitly to prove that the Sklyanin algebras are Calabi--Yau.
\end{proof}

In this way, we obtain six irreducible families of flat deformations of $\sym{\V}$ as a Koszul Calabi--Yau algebra.  The semiclassical limits of these six families are the given six families of unimodular Poisson structures, so it follows from \autoref{thm:proj-cmpt-bij} that these deformations describe all of the irreducible components of $\QDef_0(\V)$, proving \autoref{thm:qa-cmpts}.

In writing the superpotentials, we use the symbol $\circlearrowright$ to denote a sum over cyclic permutations of tensor factors, weighted by alternating signs.  Thus, for example,
\begin{align*}
x_0x_1x_2^2\ + \circlearrowright &= x_0x_1x_2^2 - x_1x_2^2x_0 + x_2^2x_0x_1 - x_2x_0x_1x_2.
\end{align*}
We also use the symbols $[\cdot]^{+}$ and $[\cdot]^{-}$ to denote symmetrization and anti-symmetrization of monomials.  Thus, for example
\[
[x_0x_1x_2]^{+} = x_0x_1x_2+x_0x_2x_1+x_1x_0x_2+x_1x_2x_0+x_2x_0x_1+x_2x_1x_0
\]
and
\[
[x_0x_1x_2]^{-} = x_0x_1x_2-x_0x_2x_1-x_1x_0x_2+x_1x_2x_0+x_2x_0x_1-x_2x_1x_0.
\]
For brevity, we include only the details about the Poisson structures and their quantizations that are necessary for the proof of \autoref{thm:qa-cmpts}.  We have also omitted several routine calculations, many of which were performed with the aid of a computer.  

\subsection{The $\bL(1,1,1,1)$ component}
\label{sec:l1111}

\subsubsection*{Poisson structure}

Consider the family of cubic one-forms on $\V^*$ having of the form
\[
\alpha = x_0x_1x_2x_3 \sum_{i=0}^3 a_i \frac{dx_i}{x_i},
\]
where $a_0,\ldots,a_3 \in \CC$ are constants such that $\sum_{i=0}^3 a_i = 0$.  The corresponding family of Poisson brackets has the form
\begin{equation}
\begin{aligned}
\{x_i,x_{i+1}\} &= (-1)^i(a_{i+3}-a_{i+2})x_ix_{i+1} \\
\{x_i,x_{i+2}\} &= (-1)^i(a_{i+1}-a_{i+3})x_ix_{i+2}
\end{aligned}\label{eqn:l1111-brac}
\end{equation}
where the indices are taken modulo four.   The closure of the $\GL{\V}$-orbit of this family is the component $\bL(1,1,1,1)$ in the space $\PB_0(\V)$ of unimodular quadratic Poisson structures.

\subsubsection*{Quantization}
From the form of the Poisson brackets, it is clear that the quantizations should be skew polynomial rings---that is, algebras with four degree-one generators $x_0,x_1,x_2,x_3$ and quadratic relations $x_ix_j = p_{ij}x_jx_i $, where $p_{ij}=p_{ji}^{-1}\in \CC^\times$.  Any such algebra is Koszul, but only certain choices of the constants $p_{ij}$ will produce a Calabi--Yau algebra~\cite[Example 5.5]{Reyes2013}, namely those for which the relations may be written
\begin{equation}
\begin{aligned}
x_ix_{i+1} &=  \rbrac{\tfrac{q_{i+3}}{q_{i+2}}}^{(-1)^i}x_{i+1}x_i \\
x_ix_{i+2} &= \rbrac{\tfrac{q_{i+1}}{q_{i+3}}}^{(-1)^i}x_{i+2}x_i
\end{aligned}\label{eqn:l1111-rels}
\end{equation}
where $q_0,\ldots,q_3 \in \CC^*$ satisfy $\prod_{i=0}^3q_i = 1$, and again the indices are taken modulo four.  This algebra is given by the following superpotential:
\begin{align*}
\Phi_{\bL(1,1,1,1)} &= \tfrac{q_0q_2}{q_1q_3}x_0x_1x_2x_3 - \tfrac{q_2}{q_3}x_0x_1x_3x_2 - \tfrac{q_2}{q_1}x_0x_2x_1x_3 \\ & \ \  + \tfrac{q_0}{q_1}x_0x_2x_3x_1 + \tfrac{q_0}{q_3}x_0x_3x_1x_2-x_0x_3x_2x_1 + \circlearrowright
\end{align*}
where $\circlearrowright$ denotes a supercyclic sum---the sum over cyclic permutations of the given expression, with appropriate signs.  

If we set $q_i = e^{\hbar a_i}$ for $0 \le i \le 3$, then the Poisson brackets \eqref{eqn:l1111-brac} are obtained as the semi-classical limit $\hbar \to 0$ of the relations \eqref{eqn:l1111-rels}.  This product is essentially the Moyal--Vey quantization~\cite{Moyal1949,Vey1975}.

\subsection{The $\bL(1,1,2)$ component}

\subsubsection*{Poisson structure}

Choose a homogeneous quadratic polynomial $g \in \sym[2]{\V}$ and consider the family of cubic one-forms
\[
\alpha = x_0x_1g\rbrac{a_0\frac{dx_0}{x_0} + a_1\frac{dx_1}{x_1} + b\frac{dg}{g}},
\]
where $a_0,a_1,b \in \CC$ are constants such that $a_0+a_1+2b=0$.

Generically, we can put this data into a normal form as follows.  We make the assumption that $g$ is non-degenerate, and that the dual quadratic form $g^{-1} \in \sym[2]{\V^*}$ restricts to a nondegenerate form on the span of $x_0$ and $x_1$; this assumption defines a Zariski open set in the space of quadratic forms.  By adjusting the coefficients $a_0,a_1$ and $b$, we may rescale $x_0$ and $x_1$ so that the inner products are $g^{-1}(x_0,x_0) = g^{-1}(x_1,x_1) = 1$ and $g^{-1}(x_0,x_1) = \lambda \in \CC$, where $\lambda$ is arbitrary.  We may then choose our remaining basis vectors $x_2,x_3$ so that
\[
g = x_0^2 + \tfrac{\lambda}{2}x_0x_1 + x_1^2 + x_2x_3. 
\]
Computing the bracket from the one-form using \eqref{eqn:pot-brac} and setting $c_0 = a_0-b$ and $c_1 = a_1-b$ gives the normal form for the Poisson brackets:
\begin{equation}
\begin{aligned}
\{x_0,x_1\} & =0  & 
\{x_2,x_3\} &=  (c_0-c_1) \left(x_{0}^{2} + \lambda  x_{0} x_{1} + x_{1}^{2} + x_{2} x_{3}\right) \\   
\{ x_0 , x_2 \} &=  c_0x_0x_2  &  \{ x_1 , x_2 \} &=  -c_1x_1x_2 \\
\{ x_0 , x_3 \} &=  -c_0x_0x_3 &  \{x_1,x_3\} &= c_1x_1x_3,  \\
\end{aligned}\label{eqn:l112-brac}
\end{equation}
with $c_0,c_1,\lambda \in \CC$.  The component $\bL(1,1,2) \subset \PB_0(\V)$ in the space of unimodular quadratic Poisson brackets is the closure of the $\GL{\V}$-orbit of this normal form.
%
\subsubsection*{Quantization}

Most of the Poisson brackets in \eqref{eqn:l112-brac} resemble the brackets from the $\bL(1,1,1,1)$ case, for which the quantization is a skew-polynomial ring.  We therefore look for Calabi--Yau algebras with relations of the form
\begin{equation}
\begin{aligned}
x_1x_0 &= x_0x_1 			& x_3x_2 &= p_0^{-1}p_1\,x_2x_3 + F\\
x_2x_0 &= p_0^{-1}\,x_0x_2 	& x_3x_0 &= p_0\,x_0x_3 \\
x_2x_1 &= p_1\,x_1x_2 		& x_3x_1 &= p_1^{-1}\,x_1x_3 \\
\end{aligned}\label{eqn:l112-rels}
\end{equation}
with $F$ a quadratic polynomial in $x_0$ and $x_1$.  These relations are readily seen to be of the form described by Cassidy, Goetz and Shelton in~\cite[Theorem 1.1]{Cassidy2006}.  In particular, they define a Koszul, Artin--Schelter regular algebra with Hilbert series $(1-t)^{-4}$.  A computer calculation shows that this algebra will be Calabi--Yau exactly when 
\[
F = (p_1-p_0)(x_0^2+\lambda x_0x_1+x_1^2) + (1-p_0^2)x_0^2+(p_1^2-1)x_1^2
\]
for some $\lambda \in \CC$, in which case the algebra is determined by the superpotential
\begin{align*}
\Phi_{\bL(1,1,2)} &= ( 1-p_0+p_1-p_0^2 ) x_0^3x_1   + \tfrac{\lambda}{2}( p_1-p_0) x_0x_1x_0x_1  \\
&\ \ \ + ( p_1^2-p_0+p_1-1 ) x_0x_1^3  + p_1p_0^{-1}\, x_0x_1x_2x_3 - x_0x_1x_3x_2  \\
&\ \  \ - p_0^{-1} x_0x_2x_1x_2 + p_1p_0^{-1}\, x_0x_2x_3x_1 +  p_1\, x_0x_3x_1x_2 - x_0x_3x_2x_1 \\
&\ \ \ + \circlearrowright.
\end{align*}
Setting $p_i = e^{\hbar c_i}$ for $i=0,1$ and taking the semi-classical limit $\hbar \to 0$ recovers the Poisson brackets \eqref{eqn:l112-brac}.

\subsection{The $\bR(2,2)$ component}

\subsubsection*{Poisson structure}
Given a pair of homogeneous quadratic forms $g_1,g_2 \in \sym[2]{\V}$ we may define a cubic one-form on $\V^*$ by
\[
\alpha = g_1\, dg_2 - g_2\, dg_1.
\]
Such one-forms define a Zariski open set in the irreducible component $\R(2,2)$ of the space of degree-two foliations on $\PP(\V^*)$.  We now seek a normal form for the Poisson brackets.

Clearly $\alpha$ depends only on the element $g_1\wedge g_2 \in \ext[2]{(\sym[2]{\V^*})}$, or up to scale, on the pencil of quadrics in $\PP(\V^*)$ spanned by $g_1$ and $g_2$.  Recall that every sufficiently generic pencil of quadrics is equivalent to one in which $g_1$ and $g_2$ have the form
\begin{align*}
g_1 &= x_1^2+x_2^2+x_3^2 \\
g_2 &= x_0^2+a_1x_1^2+a_2x_2^2+a_3x_3^2
\end{align*}
with $a_1,a_2,a_3 \in \CC$ satisfying $a_1 + a_2 + a_3 = 0$.  Using \eqref{eqn:pot-brac}, we arrive at the Poisson brackets
\begin{equation}
\begin{aligned}
\{x_0,x_1\} &= (a_3-a_2) x_2x_3 	& \ \ \ \ \{x_2,x_1\} &= x_0x_3 \\
\{x_0,x_2\} &= (a_1-a_3) x_3x_1		& \ \ \ \ \{x_3,x_2\} &= x_0x_1 \\
\{x_0,x_3\} &= (a_2-a_1) x_1x_2		& \ \ \ \ \{x_1,x_3\} &= x_0x_2,
\end{aligned}
\end{equation}
giving the famous Skylanin Poisson structure~\cite{Sklyanin1982}.
 
We remark that every scalar multiple of this Poisson structure is equivalent to another one of the same form via the substitution $x_0 \to cx_0$ with $c\in \CC^*$.  Hence the closure of the $\GL{\V}$-orbit of this normal form defines the component $\R(2,2) \subset \PB_0(\V)$ in the space of unimodular quadratic Poisson structures.

\subsubsection*{Quantization}

The quantizations of the generic Poisson structures in this component are the Sklyanin algebras~\cite{Sklyanin1982}, which appear in the same paper as the Poisson brackets.  These algebras have been well studied; for example, they have Hilbert series $(1-t)^{-4}$ and are Koszul and Calabi--Yau~\cite{Bocklandt2010,Dubois-Violette2007,Smith1992,Tate1996},  Moreover, various modules have been constructed that correspond to the projective geometry of the elliptic curve that is the base locus of the pencil of quadrics~\cite{Levasseur1993,Staniszkis1996}.  For completeness, we recall here the formulae for the relations and the superpotential from \cite[Section 7]{Bocklandt2010}.  The elements
\begin{equation}
\begin{aligned}[]
r_1 &= x_0x_1-x_1x_0 - q_1(x_2x_3+x_3x_2) 	&\   s_1 &= x_0x_1+x_1x_0 - (x_2x_3-x_3x_2) \\
r_2 &= x_0x_2-x_2x_0 - q_2(x_1x_3+x_3x_1) 	&\   s_2 &= x_0x_2+x_2x_0 - (x_3x_1-x_1x_3) \\
r_3 &= x_0x_3-x_3x_0 - q_3(x_1x_2+x_2x_1) 	&\   s_3 &= x_0x_3+x_3x_0 - (x_1x_2-x_2x_1)
\end{aligned}\label{eqn:r22-rels}
\end{equation}
in $\V \otimes \V$ give a basis for the space of relations, provided that the tuple of constants $(q_1,q_2,q_3) \in \CC^3$ satisfies the equation
\[
q_1+q_2+q_3 + q_1q_2q_3 = 0
\]
and is not of the form $(q_1,-1,1)$, $(1,q_2,-1)$ or $(-1,1,q_3)$.

The corresponding superpotential is
\[
\Phi_{\R(2,2)} = \kappa_1(r_1s_1+s_1r_1) + \kappa_2(r_2s_2+s_2r_2) + \kappa_3(r_3s_3+s_3r_3),
\]
where the constants $\kappa_1,\kappa_2,\kappa_3$ are determined by the equations
\[
\kappa_i(1+q_i) = \kappa_{i-1}(1-q_{i-1}),
\]
with the indices $i,i-1 \in \{1,2,3\}$ taken modulo three.

\subsection{The $\bR(1,3)$ component}

\subsubsection*{Poisson structure}
Given $g \in \sym[3]{\V}$ and $x_0 \in \V$, we obtain a cubic one-form on $\V^*$ by the formula
\[
\alpha = 3g\,dx_0-x_0\,dg.
\]
Such one-forms define the irreducible component $\bR(1,3)$ of $\PB_0(\V)$.

We obtain a normal form for the Poisson brackets as follows.  On the hyperplane $x_3 = 0$ in $\V^*$, we may choose  coordinates $x_1,x_2,x_3 \in \V$ and parameters  $\nu,\lambda \in \CC$ so that $g$ has the Hesse form
\[
g = \tfrac{\nu}{3}(x_1^{3}+x_2^3+x_3^3) - \lambda x_1x_2x_3 \mod x_0.
\]
Thus $g$ is given by
\[
g = \tfrac{\nu}{3}(x_1^{3}+x_2^3+x_3^3) - \lambda x_1x_2x_3 + Q(x_1,x_2,x_3)x_0 + L(x_1,x_2,x_3)x_0^2 + Cx_0^3
\]
where $L$ and $Q$ homogeneous of degrees $1$ and $2$, respectively, and $C \in \CC$.  Provided that $Q$ and $L$ are suitably generic, a coordinate change of the form $x_i \mapsto x_i + t_i x_0$ for $1\le i \le 3$ allows us to assume that $L = 0$, i.e., that $g$ has the simpler form
\[
g = \tfrac{\nu}{3}(x_1^{3}+x_2^3+x_3^3) - \lambda x_1x_2x_3 + Q'(x_1,x_2,x_3)x_0+ Cx_0^3.
\]
(A similar argument was used in \cite{LeBruyn1996} to find normal forms in the quantum case.)  Since the Poisson structure only depends on
\[
d\alpha = 4 dg \wedge dx_0
\]
we may subtract $Cx_0^3$ from $g$ without changing the Poisson structure.  Hence, we may assume that
\[
g = \tfrac{\nu}{3}(x_1^{3}+x_2^3+x_3^3) - \lambda x_1x_2x_3 + Q(x_1,x_2,x_3)x_0
\]
without loss of generality.  Let us write
\[
Q(x_1,x_2,x_3) = \tfrac{1}{2}\sum_{i=1}^3\sum_{j=1}^3 b_{ij}x_ix_j
\]
for constants $b_{ij} \in \CC$ with $b_{ij} = b_{ji}$.  Using \eqref{eqn:pot-brac}, we obtain the following normal form for the corresponding Poisson brackets:
\begin{equation}
\begin{aligned}
\{x_0,x_1\} &= 0 & \ \ \ \ \ \  \{x_{2},x_{1}\} &= \lambda x_3x_1 - \nu x_3^2 - \sum_{j=1}^3b_{1j}x_jx_0 \\
\{x_0,x_2\} &= 0 & \ \ \ \ \ \  \{x_{3},x_{2}\} &= \lambda x_3x_2 - \nu x_1^2 - \sum_{j=1}^3b_{2j}x_jx_0 \\
\{x_0,x_3\} &= 0 & \ \ \ \ \ \  \{x_{1},x_{3}\} &= \lambda x_1x_3 - \nu x_2^2 - \sum_{j=1}^3b_{3j}x_jx_0 \\
\end{aligned}\label{eqn:r13-brac}
\end{equation}
with $\nu,\lambda \in \CC$ and $b = (b_{ij}) \in \CC^{3\times 3}$ a symmetric matrix.

\subsubsection*{Quantization}

Since $x_0$ is a Casimir (central) element for the Poisson bracket on $\CC[x_0,x_1,x_2,x_3]$, the bracket is a central extension of a Poisson bracket on $\CC[x_1,x_2,x_3]$.  Correspondingly, the quantization should be a central extension of an Artin--Schelter regular algebra with Hilbert series $(1-t)^{-3}$.  Such algebras were studied in \cite
{LeBruyn1996}, where they are shown to be Koszul and one finds (essentially) the following form for the relations:
\begin{equation}
\begin{aligned}[]
[x_0,x_1] &= 0 & \ \ \ \ \ \  [x_{2},x_{1}] &= \lambda (x_2x_1+x_1x_2) - \nu x_3^2 - \sum_{j=1}^3b_{1j}x_jx_0 \\
[x_0,x_2] &= 0 & \ \ \ \ \ \  [x_{3},x_{2}] &= \lambda (x_3x_2+x_2x_3) - \nu x_1^2 - \sum_{j=1}^3b_{2j}x_jx_0 \\
[x_0,x_3] &= 0 & \ \ \ \ \ \  [x_{1},x_{3}] &= \lambda (x_1x_3+x_3x_1) - \nu x_2^2 - \sum_{j=1}^3b_{3j}x_jx_0, \\
\end{aligned}\label{eqn:r13-rels}
\end{equation}
for constants $\lambda,\nu \in \CC$ and a symmetric matrix $b=(b_{ij}) \in \CC^{3\times 3}$.  Clearly, the semi-classical limit of this family of noncommutative algebras, given by sending $\lambda,\nu,b \to 0$, is the Poisson structure~\eqref{eqn:r13-brac}.

We observe that these algebras are Calabi--Yau, being given by the following superpotential:
\begin{align*}
\Phi_{\R(1,3)} &= \tfrac{1}{4}[x_0 x_1 x_2 x_3]^{-} + \nu\,x_0(x_1^3+x_2^3+x_3^3) - \lambda \, x_0[x_1x_2 x_3]^{+}  \\
&\ \ \ + \tfrac{1}{2}x_0\sum_{i=1}^n\sum_{j=1}^n b_{ij}[x_0x_ix_j]^{+} + \circlearrowright.
\end{align*}

\subsection{The $\bS(2,3)$ component}

\subsubsection*{Poisson structure}

The one-forms that define the component $\bS(2,3)$ are those that are pulled back by a linear projection $\V^* \to \CC^3$.  Thus, in appropriate coordinates, they have the form
\[
\alpha = f_1\,dx_1 +  f_2\,dx_1 +  f_3\,dx_1,
\]
where the homogeneous cubic polynomials $f_1,f_2,f_3$ depend only on $x_1$, $x_2$ and $x_3$.

Thus the derivative has the form
\[
d\alpha = g_1\,dx_2\wedge dx_3 + g_2\, dx_3\wedge dx_1 + g_3\, dx_1\wedge dx_2
\]
for quadratic functions $g_1,g_2,g_3$.  Computing the Poisson brackets using \eqref{eqn:pot-brac}, we see that $x_1,x_2$ and $x_3$ must pairwise Poisson commute, and the remaining Poisson brackets $q_1 = \{x_0,x_1\}$, $q_2 = \{x_0,x_2\}$ and $q_3 = \{x_0,x_3\}$ depend only on $x_1$, $x_2$ and $x_3$.  In other words, this Poisson algebra is an Ore extension of the trivial Poisson structure on $\CC[x_1,x_2,x_3]$ by the derivation
\[
X = g_1\cvf{x_1} + g_2\cvf{x_2} + g_3\cvf{x_3}.
\]
We note that the unimodularity of the Poisson bracket is equivalent to the requirement that this vector field $X$ be divergence-free.

If we projectivize $\CC^3$, the derivation $X$ corresponds to an $\sO{\PP[2]}(1)$-valued vector field $Z \in \cohlgy[0]{\PP[2],\der[1]{\PP[2]}(1)}$ via the Euler sequence on $\PP[2]$.  Since $c_2(\der[1]{\PP(2)}(1))=7$, such a section will vanish at exactly 7 distinct points in $\PP[2]$, provided that $X$ is suitably generic; in fact, the section is determined up to rescaling by these seven points~\cite{Campillo1999}.  Applying a linear automorphism, we may assume that three of these points are $[1,0,0]$, $[0,1,0]$ and $[0,0,1]$.  Correspondingly, $X \wedge E$ must vanish on the lines through $(1,0,0)$, $(0,1,0)$ and $(0,0,1)$ in $\CC^3$.  Here, $E = x_1\cvf{x_1} + x_2\cvf{x_2} + x_3\cvf{x_3}$ is the Euler vector field.  With this constraint, one can readily compute that the components of $X$ must have the form
\[
q_i = a_i x_i^2 + x_i(b_i x_{i+1} + c_ix_{i-1}) + d_ix_ix_{i-1}
\]
for constants $a_i,b_i,c_i,d_i \in \CC$ with the index $i \in \{1,2,3\}$ taken modulo three.  The vector field $X$ is divergence-free if and only if
\[
2a_i + b_{i-1} + c_{i-2} = 0
\]
for all $i$.  Assuming $a_i \ne 0$ for all $i$, applying the transformation $x_i \mapsto a_i^{-1}x_i$ and relabelling the other parameters, we arrive at the following normal form for the Poisson brackets:
\begin{equation}
\begin{aligned}
\{x_0,x_1\} &= x_1^2 + x_1(b_1 x_{2} + c_1x_{3}) + d_1x_2x_{3} &\ \ \ \ \ \  \{x_2,x_3\} &= 0 \\
\{x_0,x_2\} &= x_2^2 + x_2(b_2 x_{3} + c_2x_{1}) + d_2x_3x_{1} &\ \ \ \ \ \ \{x_3,x_1\} &= 0 \\
\{x_0,x_3\} &= x_3^2 + x_3(b_3 x_{1} + c_3x_{2}) + d_3x_1x_{2} &\ \ \ \ \ \ \{x_1,x_2\} &= 0 \\
\end{aligned}\label{eqn:s23-brac}
\end{equation}
where $b_i,c_i,d_i \in \CC$ for $1 \le i \le 3$ satisfy
\[
b_i + c_{i-1} = -2
\]
for all $i$.  The closure of the $\GL{\V}$-orbit of this family gives the component $\bS(2,3) \subset \PB_0(\V)$ in the space of unimodular quadratic Poisson structures.

\subsubsection*{Quantization}

Corresponding to the fact that the Poisson structure~\eqref{eqn:s23-brac} is a Poisson Ore extension of the trivial Poisson structure on $\CC[x_1,x_2,x_3]$, its quantization is a graded Ore extension of the polynomial ring and is therefore Koszul~\cite{Cassidy2008}.

The relations for the quantization take the same form as the Poisson brackets:
\begin{equation}
\begin{aligned}[]
[x_0,x_1] &= x_1^2 + x_1(b_1 x_{2} + c_1x_{3}) + d_1x_2x_{3} &\ \ \ \ \ \ [x_2,x_3] &= 0 \\
[x_0,x_2] &= x_2^2 + x_2(b_2 x_{3} + c_2x_{1}) + d_2x_3x_{1} &\ \ \ \ \ \ [x_3,x_1] &= 0 \\
[x_0,x_3] &= x_3^2 + x_3(b_3 x_{1} + c_3x_{2}) + d_3x_1x_{2} &\ \ \ \ \ \ [x_1,x_2] &= 0 \\
\end{aligned}\label{eqn:s23-rels}
\end{equation}
where $b_i,c_i,d_i \in \CC$ for $1 \le i \le 3$ satisfy
\[
b_i + c_{i-1} = -2
\]
for all $i$.

Define the tensors $Q_1,Q_2,Q_3 \in \V^{\otimes 2}$ by
\begin{align*}
Q_i &= \tfrac{1}{8}(3b_i+c_{i-1})(x_ix_{i+1}+x_{i+1}x_i) + \tfrac{1}{8}(3c_i+b_{i+1})(x_ix_{i+2}+x_{i+2}x_i) \\
&\ \ \ + \tfrac{1}{4}d_i(x_{i+1}x_{i+2}+x_{i+2}x_{i+1})
\end{align*}
with the indices $i,i+1,i+2 \in \{1,2,3\}$ taken modulo $3$.
Then the following tensor is a superpotential for these algebras:
\begin{align*}
\Phi_{\bS(2,3)} &= \frac{1}{4}[x_0x_1x_2x_3]^{-} + Q_1(x_3x_2-x_2x_3) \\
&\ \ \ + Q_2(x_1x_3-x_3x_1)+Q_3(x_2x_1-x_1x_2) + \circlearrowright.
\end{align*}
We conclude that these algebras are Calabi--Yau.

\subsection{The $\E(3)$ component}
\label{sec:e3}

\subsubsection*{Poisson structure}
The final component in the classification is the one that Cerveau and Lins--Neto call the exceptional component $\E(3)$.  The Poisson structure is defined as follows: in a basis $x_0,x_1,x_2,x_3 \in \V$, consider the vector fields
\begin{equation}
\begin{aligned}
X &= -\tfrac{5}{4}x_0\cvf{x_0} - \tfrac{1}{4}x_1\cvf{x_1}+\tfrac{3}{4}x_2\cvf{x_2} + \tfrac{7}{4}x_3\cvf{3} \\
Y &= 4x_0\cvf{x_1}+4x_1\cvf{x_2}+4x_2\cvf{x_3}.
\end{aligned}\label{eqn:e3-vfs}
\end{equation}
on $\V^*$.  These vector fields satisfy the identity
\[
[Y,X] = Y
\]
and, as a result, the formula
\[
\{f,g\} = X(f)Y(g) - Y(f)X(g)
\]
for all $f,g \in \sym{\V}$ defines a Poisson bracket.  The elementary brackets are
\begin{equation}
\begin{aligned}
\{x_0,x_1\} &= 5x_{0}^{2}	& \ \ \ \ \ \{x_1,x_2\} &= x_1^2 + 3x_0x_2 \\
\{x_0,x_2\} &= 5x_0x_1 		& \ \ \ \ \ \{x_1,x_3\} &= x_1x_2+7x_0x_3 \\
\{x_0,x_3\} &= 5x_0x_2 		& \ \ \ \ \ \{x_2,x_3\} &= 7x_1x_3-3x_2^2 \\
\end{aligned}\label{eqn:e3-brac}
\end{equation}
This Poisson structure is rigid: any unimodular quadratic Poisson bracket that is sufficiently close to this one differs from it by a linear change of variables and therefore results in an isomorphic graded Poisson algebra.  The component $\E(3) \subset \PB_0(\V)$ in the space of unimodular quadratic Poisson structures is the closure of its $\GL{\V}$-orbit and contains nontrivial degenerations of this normal form.

\subsubsection*{Quantization}

The quantization of the Poisson bracket \eqref{eqn:e3-brac} can be obtained using the universal deformation formula of Coll, Gerstenhaber and Giaquinto~\cite{Coll1989}, which we presently recall.

Let $\A$ be a commutative $\CC$-algebra and let $X,Y : \A \to \A$ be derivations such that $[Y,X]=Y$.  Define a map
\[
\star : \A[[\hbar]] \otimes_\CC \A[[\hbar]] \to \A[[\hbar]]
\]
by the formula
\begin{align}
f \star g = \sum_{k=0}^\infty \hbar^kY^k(f)\cdot {X \choose k}(g) \label{eqn:cgg}
\end{align}
where
\[
{X \choose k} = \frac{1}{k!}X(X-1)\cdots(X-k+1).
\]
The result of~\cite{Coll1989} is that $\star$ defines an associative product on $\A[[\hbar]]$ whose semiclassical limit $\hbar \to 0$ is the Poisson bracket
\[
\{f,g\} = X(f)Y(g)- Y(f)X(g).
\]
As it stands, the power series is merely formal, while we seek actual, convergent deformations; we wish to set $\hbar = 1$.  Luckily, there is a useful criterion that can be used to guarantee convergence.  Recall that a derivation of $Z : \A \to \A$ is \defn{locally nilpotent} if for every $a \in \A$ there exists a $k \in \NN$ such that $Z^k(a) = 0$.  The following observation is immediate from the CGG formula \eqref{eqn:cgg}:
\begin{lemma}\label{lem:loc-nilp}
If the derivation $Y \in \g$ acts locally nilpotent on $\A$, then the CGG formula for $f \star g$ truncates to a polynomial in $\hbar$ for any $f,g \in \A$, i.e., it defines a map
\[
\A \otimes_\CC \A \to \A[\hbar].
\]
Evaluation at a particular value $\hbar \in \CC$ then gives an associative product
\[
\star_\hbar : \A \otimes_\CC \A \to \A.
\]
\end{lemma}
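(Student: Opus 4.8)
The plan is to derive both claims almost directly from the CGG formula \eqref{eqn:cgg} together with the associativity of $\star$ on $\A[[\hbar]]$ established in~\cite{Coll1989}; the role of the local nilpotence hypothesis is precisely to convert the formal parameter $\hbar$ into one that may be specialized to a genuine complex number. First I would prove the truncation statement. Fix $f \in \A$. Since $Y$ acts locally nilpotently, there is an integer $N = N(f) \in \NN$ with $Y^N(f) = 0$, and hence $Y^k(f) = 0$ for all $k \ge N$. Every term of \eqref{eqn:cgg} with index $k \ge N$ therefore vanishes, so for any $g \in \A$ the sum collapses to
\[
f \star g = \sum_{k=0}^{N-1} \hbar^k\, Y^k(f)\cdot \binom{X}{k}(g),
\]
which is a polynomial in $\hbar$ of degree less than $N$ with coefficients in $\A$. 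This is exactly the asserted map $\A \otimes_\CC \A \to \A[\hbar]$.

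Next I would record that $\A[\hbar]$ is closed under $\star$ and then specialize. Extending $\star$ to $\A[[\hbar]]$ by $\CC[[\hbar]]$-bilinearity, the previous paragraph shows that the product of two elements of $\A \subset \A[\hbar]$ again lies in $\A[\hbar]$; bilinearity in $\hbar$ then gives $\A[\hbar] \star \A[\hbar] \subseteq \A[\hbar]$, so that $(\A[\hbar], \star)$ is a subalgebra of the associative algebra $(\A[[\hbar]], \star)$, and in particular an associative $\CC[\hbar]$-algebra. To pass to a fixed value $\hbar = \lambda \in \CC$, I would specialize along the evaluation homomorphism $\CC[\hbar] \to \CC$, $\hbar \mapsto \lambda$. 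Concretely, the associativity identity $(f \star g)\star h = f \star (g \star h)$ is now an equality of polynomials in $\A[\hbar]$, and since $\star$ is $\CC$-linear in each argument, evaluating all coefficients at $\hbar = \lambda$ yields $(f \star_\lambda g)\star_\lambda h = f \star_\lambda(g \star_\lambda h)$. Equivalently, and more invariantly, the base change $\A[\hbar]\otimes_{\CC[\hbar]}\CC \cong \A$ inherits the product $\star_\lambda$, which is then associative because base change of an associative algebra is associative.

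The step I expect to require the most care is this final specialization: a priori the CGG product is only known to be associative over the ring of formal power series, where setting $\hbar$ equal to a nonzero scalar is meaningless. The content of the hypothesis is that local nilpotence of $Y$ forces $\star$ to land in the polynomial ring $\A[\hbar]$, where evaluation at $\hbar = \lambda$ is both well defined and compatible with the multiplication; once this is in place, the associativity of $\star_\lambda$ is a formal consequence of the associativity of $\star$ over $\CC[\hbar]$. Everything else is a direct reading of \eqref{eqn:cgg}.
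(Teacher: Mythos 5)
Your proof is correct and matches the paper's intended argument: the paper states this lemma as ``immediate from the CGG formula \eqref{eqn:cgg}'', and your two steps---truncation of the sum using $Y^k(f)=0$ for $k \ge N(f)$, then closure of $\A[\hbar]$ under $\star$ and specialization of the resulting polynomial associativity identity at $\hbar=\lambda$---are exactly the reasoning being left implicit there. No gaps.
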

It is straightforward to show using the commutation relation $[Y,X]=Y$ that for any $\hbar \ne 0$, the $\CC$-linear automorphism $e^{\hbar X} : \A \to \A$ gives an isomorphism between the products $\star_{\hbar}$ and $\star_1$, and hence the quantizations for different values of $\hbar \ne 0$ are all isomorphic.

For the particular case of the $\E(3)$ Poisson structure, it is evident from \eqref{eqn:e3-vfs} that $Y$ is locally nilpotent and so the lemma applies.  Computing the commutators and setting $\hbar = 1$, we obtain the algebra with generators $x_0,\ldots,x_3$ and relations
\begin{equation}
\begin{aligned}[]
[x_0,x_1]&= 5x_0^2 \\
[x_0,x_2]&=-\tfrac{45}{2}x_0^2 + 5x_0x_1 \\
[x_0,x_3]&= \tfrac{195}{2}x_0^2-\tfrac{45}{2}x_0x_1 + 5x_0x_2 \\
[x_1,x_2]&= -\tfrac{3}{2}x_0x_1 + 3x_0x_2+x_1^2 \\
[x_1,x_3]&=5x_0x_1-3x_0x_2 + 7x_0x_3-\tfrac{5}{2}x_1^2+x_1x_2 \\
[x_2,x_3]&= -\tfrac{77}{2}x_0x_2-\tfrac{77}{2}x_0x_3+\tfrac{21}{2}x_1x_2+7x_1x_3-3x_2^2. \\
\end{aligned}
\label{eqn:e3-rels}
\end{equation}
The sequence $(x_0,x_1,x_2,x_3)$ is a normal regular sequence, and hence this algebra is Koszul.  It is derived from the following superpotential:
\begin{align*}
\Phi_{\E(3)} &= \tfrac{75}{2}  x_0^3 x_1  -100   x_0^3 x_2  -25 
  x_0^3 x_3  - \tfrac{75}{4} x_0 ^2 x_1^2 +  30  x_0^2 x_1 x_2 + \tfrac{15}{2} x_0^2 x_1 x_3  \\
&\ \ \ +  50   x_0^2 x_2 x_1  -15 x_0^2 x_2^2  -5  x_0^2 x_2 x_3  + \tfrac{55}{2}  x_0^2 x_3 x_1 +
  5  x_0^2 x_3 x_2 - 6  x_0 x_1 x_0 x_1  \\
&\ \ \ +  13  x_0 x_1 x_0 x_2 + 4  x_0 x_1 x_0 x_3   +  3   
    x_0x_1^3 - \tfrac{9}{2}  x_0 x_1^2 x_2  -  x_0 x_1^2 x_3   -12  x_0 x_1 x_2 x_1 \\
&\ \ \ + 3 x_0 x_1 x_2^2 +  x_0 x_1 x_2 x_3 -6 x_0 x_1 x_3 x_1  -  x_0 x_1 x_3 x_2 
  + 8  x_0 x_2 x_0 x_2 \\ 
&\ \ \ + 2 x_0 x_2 x_0 x_3  + \tfrac{1}{2}x_0x_2x_1^2  - 2 x_0 x_2 x_1 x_2  -   x_0 x_2 x_1 x_3 
+  3  x_0 x_2^2 x_1   \\
&\ \ \  + x_0 x_2 x_3 x_1 - x_0 x_3 x_1^2 +  x_0 x_3 x_1 x_2  -  x_0 x_3 x_2 x_1 + \circlearrowright
\end{align*}
In particular, it is Calabi--Yau, completing the description of the components of $\QDef_0(\V)$.

\section{Quantization of actions of the affine group}
\label{sec:schwarz}

As explained in~\cite{Calvo-Andrade2007}, the $\E(3)$ Poisson structure arises from an action of the group $\G \cong \CC^\times \ltimes \CC$ of automorphisms of $\PP[1]$ that preserve $\infty \in \PP[1]$.  In this section, we briefly explain how this construction generalizes to other $\G$-varieties, and how this viewpoint allows us to produce some interesting bimodules over the quantizations.

Suppose that $\X$ is a projective variety carrying an action of $\G$, and that $\sL$ is an ample invertible sheaf on $\X$ that is equipped with a $\G$-equivariant structure.  For example, if $\X$ is Fano, then we can use the anticanonical line bundle.  The group $\G$ acts on the homogeneous coordinate ring
\[
\A(\X,\sL) = \bigoplus_{k \ge 0} \cohlgy[0]{\X,\sL^{\otimes k}}
\]
by automorphisms that preserve the grading.  Differentiating the $\G$-action, we obtain an action of the Lie algebra $\g = Lie(\G)$ by homogeneous derivations as in the previous section.
\begin{lemma}
The derivation $Y$ acts locally nilpotently on $\A(\X,\sL)$.
\end{lemma}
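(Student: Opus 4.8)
The plan is to use the fact that, under the isomorphism $\G \cong \CC^\times \ltimes \CC$, the derivation $Y$ is the infinitesimal generator of the unipotent factor $\CC \cong \mathbb{G}_a$, while $X$ generates the torus $\CC^\times$. Unipotent group actions are precisely the ones whose generators act locally nilpotently, so the result should be essentially structural.

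First I would reduce to a single graded piece. Since $\X$ is projective and each $\sL^{\otimes k}$ is coherent, every $\A_k = \cohlgy[0]{\X,\sL^{\otimes k}}$ is finite-dimensional. Because $\G$ acts by automorphisms preserving the grading, the induced derivations $X,Y \in \g$ are homogeneous of degree zero, so each $\A_k$ is a finite-dimensional $\g$-submodule. It therefore suffices to show that $Y$ acts nilpotently on each $\A_k$: any $a \in \A(\X,\sL)$ is a finite sum of homogeneous components, and a power of $Y$ annihilating each of them (the maximum of the finitely many resulting exponents) annihilates $a$.

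The key step is the nilpotency of $Y$ on the finite-dimensional space $\A_k$, which I would deduce from the commutation relation together with the torus action. The rational representation of $\CC^\times$ on $\A_k$ decomposes into weight spaces, so $X$ acts semisimply with a finite set of integer weights. Writing the relation $[Y,X]=Y$ as $XY = YX - Y$, a one-line computation shows that if $Xv = \alpha v$ then $X(Yv) = (\alpha-1)Yv$; that is, $Y$ carries the $\alpha$-weight space into the $(\alpha-1)$-weight space and so strictly lowers weights. Since the weight set is finite, iterating $Y$ eventually lands in a zero weight space, and in fact $Y^m = 0$ on $\A_k$ once $m$ exceeds the spread between the largest and smallest weights. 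This gives nilpotency on each $\A_k$, hence local nilpotence on $\A(\X,\sL)$.

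The main obstacle, or rather the one point that must be pinned down, is the algebraicity of the induced representations: one needs that the $\G$-action on global sections of the equivariant sheaf $\sL^{\otimes k}$ is rational, so that the torus $\CC^\times$ genuinely acts with finitely many integer weights and $X$ is diagonalizable. This is the standard statement that taking global sections of a $\G$-equivariant coherent sheaf over a variety carrying an algebraic $\G$-action yields a rational $\G$-module. If one prefers to avoid weights altogether, the same conclusion follows directly from unipotence: the map $t \mapsto \exp(tY)$ is a one-parameter subgroup of $\GL{\A_k}$ with polynomial matrix entries, which forces its generator $Y$ to be nilpotent. Either route completes the argument.
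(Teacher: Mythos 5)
Your proof is correct, and its skeleton matches the paper's: restrict to the finite-dimensional graded pieces $\A_k = \cohlgy[0]{\X,\sL^{\otimes k}}$, use the relation $[Y,X]=Y$ to show that $Y$ shifts the spectral decomposition of $X$ down by one, and conclude nilpotence on each piece from finiteness of the spectrum. The one genuine difference is where that spectral decomposition comes from. You decompose $\A_k$ into honest weight spaces of $X$, which forces you to address diagonalizability of $X$ --- hence the paragraph you spend securing rationality of the $\G$-representation, or alternatively unipotence of $t \mapsto \exp(tY)$. The paper sidesteps this issue entirely by decomposing $\A_k$ into \emph{generalized} eigenspaces of $X$: since $XY = Y(X-1)$, one has $\bigl(X-(\lambda-1)\bigr)^n Y = Y\bigl(X-\lambda\bigr)^n$, so $Y$ carries the generalized $\lambda$-eigenspace into the generalized $(\lambda-1)$-eigenspace, and this decomposition exists for \emph{any} linear operator on a finite-dimensional complex vector space, with no algebraicity input whatsoever. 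In short, what you flag as ``the main obstacle'' is not an obstacle in the paper's formulation; its argument is pure linear algebra once each $\A_k$ is known to be finite-dimensional. Your route buys a bit more in exchange for that extra input: integer weights, an explicit bound on the nilpotency order in terms of the weight spread, and the structural interpretation of $Y$ as the generator of the unipotent radical of $\G$ --- all of which are true and potentially useful, but none of which is needed for the lemma itself.
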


\begin{proof}
For $k \ge 0$, consider the action of $\g$ on $\W = \cohlgy[0]{\X,\sL^{\otimes k}}$.  We will show that $Y$ acts nilpotently on $\W$.  Indeed, $\W$ decomposes into generalized eigenspaces for the action of $X$, and the relation $[Y,X]=Y$ ensures that the action of $Y$ takes the generalized eigenspace with eigenvalue $\lambda$ to the generalized eigenspace with eigenvalue $\lambda - 1$.  Since $\W$ is finite-dimensional, there are only finitely many nonzero generalized eigenspaces, and hence $Y^n\W = 0$ for sufficiently large $n$.
\end{proof}

As a result, we can apply \autoref{lem:loc-nilp} and conclude that for any value of $\hbar \in \CC$, the CGG star product \eqref{eqn:cgg} defines an associate, noncommutative product on $\A(\X,\sL)$.  We denote the corresponding graded ring by $\A(\X,\sL,\hbar)$.

We see immediately that if $\Z \subset \X$ is a $\G$-equivariant subscheme, then the corresponding homogeneous ideal $\I \subset \A(\X,\sL)$ is also a two-sided ideal for $\A(\X,\sL,\hbar)$.  Hence the embedding $\Z \subset \X$ is quantized.

Similarly, if $\sE$ is a $\G$-equivariant sheaf on $\X$, we can use the CGG formula to make the graded vector space
\[
\M(\X,\sL,\sE) = \bigoplus_{k \in \ZZ} \cohlgy[0]{\X,\sL^{\otimes k}\otimes \sE}
\]
into a bimodule over $\A(\X,\sL,\hbar)$.  Thus, for every $\hbar \in \CC$, we have a functor which takes $\G$-equivariant sheaves on $\X$ to bimodules over $\A(\X,\sL,\hbar)$.  In particular, the sheaves of differential forms and vector fields are quantized.

\begin{example}
Let $\X = \PP[n]$ be the $n$th symmetric power of $\PP[1]$, on which $\G$ acts diagonally.  The ample line bundle $\sL = \sO{\PP[n]}(1)$, being a root of the anti-canonical bundle, carries a canonical action of $\G$.  We therefore obtain a quantization $\A(\PP[n],\sO{\PP[n]}(1),\hbar)$ of the polynomial ring in $n+1$ variables.

For $1 \le k \le n$, the embedding $\mu_k : \PP[1] \to \PP[n]$ that sends the point $p$ to the degree-$n$ divisor $(n-k)\cdot p + k\cdot\infty$ is $\G$-equivariant, and therefore quantizes to give commutative rational curves in the corresponding quantum $\PP[n]$.  When $n=2$, the curve $\mu_2(\PP[1])$ is a conic, and $\mu_1(\PP[1])$ is a line tangent to this conic, a situation familiar from the classifications in~\cite{Artin1990,Bondal1993a}.  

When $n=3$, we obtain the $\E(3)$ algebra of the previous section, which contains three commutative rational curves: a twisted cubic, a plane conic and a line that is tangent to both.  The two-dimensional $\G$-orbits are the symplectic leaves of the corresponding Poisson structure and form a pencil of singular sextic surfaces---the level sets of the $j$-invariant.  Hence we obtain noncommutative singular sextics, which is particularly interesting in light of the fact that there are no Poisson structures on a smooth sextic. \qed
\end{example}

\begin{example}
Let $\X = \PP[1] \times \PP[n-1]$ where $\PP[n-1]$ is viewed as the $(n-1)$st symmetric power of $\PP[1]$ and $\G$ acts diagonally on $\X$.  The divisor $\D = \{\infty\} \times \PP[n-1] \subset \X$ is $\G$-invariant and hence for every $k \in \ZZ$, the sheaf $\sO{\X}(k\D)$ has a canonical $\G$-equivariant structure.  For $k$ sufficiently large, its direct image under the symmetrization map $\pi : \PP[1]\times \PP[n-1] \to \PP[n]$ is an indecomposable vector bundle of rank $n$, known as a Schwarzenberger bundle~\cite{Schwarzenberger1961}.  By the above procedure, we can quantize the Schwarzenberger bundles to obtain graded bimodules for the algebra $\A(\PP[n],\sO{\PP[n]}(1),\hbar)$ constructed in the previous example.\qed
\end{example}

\begin{example}
In \cite{Loray2013}, one finds a classification of Poisson structures on Fano threefolds of Picard rank one.  Many of these Poisson structures are induced by an action of the group $\G$, and hence the procedure above can be used to quantize them.  Perhaps the most interesting example is the Mukai--Umemura threefold, which is embedded in the 12th symmetric power $\PP[12]$ of $\PP[1]$.  It is the closure of the $\Aut{{\PP[1]}}$-orbit of the divisor formed from the vertices of a regular icosahedron.\qed
\end{example}

\begin{example}
Let $\Gamma \subset \SL{2,\CC}$ be a finite subgroup and choose an embedding $\G \subset \SL{2,\CC}$.  Then any equivariant compactification of $\SL{2,\CC}/\Gamma$ inherits a $\G$-action.  $\SL{2,\CC}$-equivariant bundles on such compactifications---particularly $\PP[3]$ and the Mukai--Umemura threefold discussed above---play a central role in Hitchin's constructions~\cite{Hitchin1995a,Hitchin2010c} of algebraic solutions of the sixth Painlev\'e equation.   It would be interesting to understand how those solutions are linked with the corresponding quantum bimodules.\qed
\end{example}

\bibliographystyle{hyperamsplain}
\bibliography{p3-quant}

\providecommand{\bysame}{\leavevmode\hbox to3em{\hrulefill}\thinspace}
\providecommand{\MR}{\relax\ifhmode\unskip\space\fi MR }
\providecommand{\MRhref}[2]{%
  \href{http://www.ams.org/mathscinet-getitem?mr=#1}{#2}
}
\providecommand{\href}[2]{#2}
\begin{thebibliography}{10}

\bibitem{Artin1969}
M.~Artin, \emph{Algebraic approximation of structures over complete local
  rings}, Inst. Hautes \'Etudes Sci. Publ. Math. (1969), no.~36, 23--58.

\bibitem{Artin1990}
M.~Artin, J.~Tate, and M.~Van~den Bergh, \emph{Some algebras associated to
  automorphisms of elliptic curves}, The {G}rothendieck {F}estschrift, {V}ol.\
  {I}, Progr. Math., vol.~86, Birkh\"auser Boston, Boston, MA, 1990,
  pp.~33--85.

\bibitem{Artin1994}
M.~Artin and J.~J. Zhang, \emph{Noncommutative projective schemes},
  \href{http://dx.doi.org/10.1006/aima.1994.1087}{Adv. Math. \textbf{109}
  (1994)}, no.~2, 228--287.

\bibitem{Artin1987}
M.~Artin and W.~F. Schelter, \emph{Graded algebras of global dimension {$3$}},
  \href{http://dx.doi.org/10.1016/0001-8708(87)90034-X}{Adv. in Math.
  \textbf{66} (1987)}, no.~2, 171--216.

\bibitem{Bocklandt2010}
R.~Bocklandt, T.~Schedler, and M.~Wemyss, \emph{Superpotentials and higher
  order derivations}, \href{http://dx.doi.org/10.1016/j.jpaa.2009.07.013}{J.
  Pure Appl. Algebra \textbf{214} (2010)}, no.~9, 1501--1522.

\bibitem{Bondal1993a}
A.~I. Bondal and A.~E. Polishchuk, \emph{Homological properties of associative
  algebras: the method of helices},
  \href{http://dx.doi.org/10.1070/IM1994v042n02ABEH001536}{Izv. Ross. Akad.
  Nauk Ser. Mat. \textbf{57} (1993)}, no.~2, 3--50.

\bibitem{Bondal1993}
A.~I. Bondal, \emph{{Non-commutative deformations and Poisson brackets on
  projective spaces}}, Max-Planck-Institute Preprint (1993), no.~93-67.

\bibitem{Calvo-Andrade2007}
O.~Calvo-Andrade and F.~Cukierman, \emph{A note on the {$\jmath$} invariant and
  foliations}, Proceedings of the {XVI}th {L}atin {A}merican {A}lgebra
  {C}olloquium ({S}panish), Bibl. Rev. Mat. Iberoamericana, Rev. Mat.
  Iberoamericana, Madrid, 2007, pp.~99--108.

\bibitem{Campillo1999}
A.~Campillo and J.~Olivares, \emph{A plane foliation of degree different from 1
  is determined by its singular scheme},
  \href{http://dx.doi.org/10.1016/S0764-4442(99)80289-4}{C. R. Acad. Sci. Paris
  S\'er. I Math. \textbf{328} (1999)}, no.~10, 877--882.

\bibitem{Cassidy1999}
T.~Cassidy, \emph{Global dimension {$4$} extensions of {A}rtin-{S}chelter
  regular algebras}, \href{http://dx.doi.org/10.1006/jabr.1999.7902}{J. Algebra
  \textbf{220} (1999)}, no.~1, 225--254.

\bibitem{Cassidy2006}
T.~Cassidy, P.~Goetz, and B.~Shelton, \emph{Generalized {L}aurent polynomial
  rings as quantum projective 3-spaces},
  \href{http://dx.doi.org/10.1016/j.jalgebra.2005.10.027}{J. Algebra
  \textbf{303} (2006)}, no.~1, 358--372.

\bibitem{Cassidy2008}
T.~Cassidy and B.~Shelton, \emph{Generalizing the notion of {K}oszul algebra},
  \href{http://dx.doi.org/10.1007/s00209-007-0263-8}{Math. Z. \textbf{260}
  (2008)}, no.~1, 93--114.

\bibitem{Cerveau1996}
D.~Cerveau and A.~Lins~Neto, \emph{Irreducible components of the space of
  holomorphic foliations of degree two in {$\bold C{\rm P}(n)$}, {$n\geq 3$}},
  \href{http://dx.doi.org/10.2307/2118537}{Ann. of Math. (2) \textbf{143}
  (1996)}, no.~3, 577--612.

\bibitem{Coll1989}
V.~Coll, M.~Gerstenhaber, and A.~Giaquinto, \emph{An explicit deformation
  formula with noncommuting derivations}, Ring theory 1989 ({R}amat {G}an and
  {J}erusalem, 1988/1989), Israel Math. Conf. Proc., vol.~1, Weizmann,
  Jerusalem, 1989, pp.~396--403.

\bibitem{Dolgushev2009}
V.~Dolgushev, \emph{The {V}an den {B}ergh duality and the modular symmetry of a
  {P}oisson variety},
  \href{http://dx.doi.org/10.1007/s00029-008-0062-z}{Selecta Math. (N.S.)
  \textbf{14} (2009)}, no.~2, 199--228.

\bibitem{Drinfeld1992}
V.~G. Drinfel'd, \emph{On quadratic commutation relations in the quasiclassical
  case [translation of {\it {M}athematical physics, functional analysis
  ({R}ussian)}, 25--34, 143, ``{N}aukova {D}umka'', {K}iev, 1986; {MR}0906075
  (89c:58048)]}, Selecta Math. Soviet. \textbf{11} (1992), no.~4, 317--326.
  Selected translations.

\bibitem{Dubois-Violette2005}
M.~Dubois-Violette, \emph{Graded algebras and multilinear forms},
  \href{http://dx.doi.org/10.1016/j.crma.2005.10.017}{C. R. Math. Acad. Sci.
  Paris \textbf{341} (2005)}, no.~12, 719--724.

\bibitem{Dubois-Violette2007}
\bysame, \emph{Multilinear forms and graded algebras},
  \href{http://dx.doi.org/10.1016/j.jalgebra.2007.02.007}{J. Algebra
  \textbf{317} (2007)}, no.~1, 198--225.

\bibitem{Etingof2010}
P.~Etingof and V.~Ginzburg, \emph{Noncommutative del {P}ezzo surfaces and
  {C}alabi-{Y}au algebras}, \href{http://dx.doi.org/10.4171/JEMS/235}{J. Eur.
  Math. Soc. (JEMS) \textbf{12} (2010)}, no.~6, 1371--1416,
  \href{http://arxiv.org/abs/0709.3593}{{\tt 0709.3593}}.

\bibitem{GAP4}
The GAP~Group, \emph{{GAP -- Groups, Algorithms, and Programming, Version
  4.6.5}}, 2013.

\bibitem{Ginzburg2006}
V.~Ginzburg, \emph{{C}alabi-{Y}au algebras},
  \href{http://arxiv.org/abs/math/0612139}{{\tt math/0612139}}.

\bibitem{Goetz2003}
P.~D. Goetz, \emph{The noncommutative algebraic geometry of quantum projective
  spaces}, ProQuest LLC, Ann Arbor, MI, 2003. Thesis (Ph.D.)--University of
  Oregon.

\bibitem{Goodman2013}
J.~Goodman and U.~Kraehmer, \emph{Untwisting a twisted {C}alabi-{Y}au algebra},
  \href{http://arxiv.org/abs/1304.0749}{{\tt 1304.0749}}.

\bibitem{He2013}
J.-W. He, F.~Van~Oystaeyen, and Y.~Zhang, \emph{Calabi-{Y}au algebras and their
  deformations}, Bull. Math. Soc. Sci. Math. Roumanie (N.S.) \textbf{56(104)}
  (2013), no.~3, 335--347.

\bibitem{Hitchin1995a}
N.~J. Hitchin, \emph{Poncelet polygons and the {P}ainlev\'e equations},
  Geometry and analysis ({B}ombay, 1992), Tata Inst. Fund. Res., Bombay, 1995,
  pp.~151--185.

\bibitem{Hitchin2010c}
N.~Hitchin, \href{http://dx.doi.org/10.1090/conm/522/10292}{\emph{Vector
  bundles and the icosahedron}}, Vector bundles and complex geometry, Contemp.
  Math., vol. 522, Amer. Math. Soc., Providence, RI, 2010, pp.~71--87.

\bibitem{Holzer2008}
S.~Holzer and O.~Labs, \emph{{\sc surfex 0.90}}, Tech. report, University of
  Mainz, University of Saarbr\"ucken, 2008, {\tt
  www.surfex.AlgebraicSurface.net}.

\bibitem{Kontsevich2001}
M.~Kontsevich, \emph{Deformation quantization of algebraic varieties},
  \href{http://dx.doi.org/10.1023/A:1017957408559}{Lett. Math. Phys.
  \textbf{56} (2001)}, no.~3, 271--294. EuroConf{\'e}rence Mosh{\'e} Flato
  2000, Part III (Dijon).

\bibitem{Kontsevich2003}
\bysame, \emph{Deformation quantization of {P}oisson manifolds},
  \href{http://dx.doi.org/10.1023/B:MATH.0000027508.00421.bf}{Lett. Math. Phys.
  \textbf{66} (2003)}, no.~3, 157--216.

\bibitem{LeBruyn1993}
L.~Le~Bruyn and S.~P. Smith, \emph{Homogenized {$\mathfrak{sl}(2)$}},
  \href{http://dx.doi.org/10.2307/2160112}{Proc. Amer. Math. Soc. \textbf{118}
  (1993)}, no.~3, 725--730.

\bibitem{LeBruyn1996}
L.~Le~Bruyn, S.~P. Smith, and M.~Van~den Bergh, \emph{Central extensions of
  three-dimensional {A}rtin-{S}chelter regular algebras},
  \href{http://dx.doi.org/10.1007/PL00004532}{Math. Z. \textbf{222} (1996)},
  no.~2, 171--212.

\bibitem{Levasseur1993}
T.~Levasseur and S.~P. Smith, \emph{Modules over the {$4$}-dimensional
  {S}klyanin algebra}, Bull. Soc. Math. France \textbf{121} (1993), no.~1,
  35--90.

\bibitem{Liu1992}
Z.-J. Liu and P.~Xu, \emph{On quadratic Poisson structures},
  \href{http://dx.doi.org/10.1007/BF00420516}{Lett. Math. Phys. \textbf{26}
  (1992)}, no.~1, 33--42.

\bibitem{Loray2013}
F.~Loray, J.~V. Pereira, and F.~Touzet, \emph{Foliations with trivial canonical
  bundle on {F}ano 3-folds},
  \href{http://dx.doi.org/10.1002/mana.201100354}{Math. Nachr. \textbf{286}
  (2013)}, no.~8-9, 921--940.

\bibitem{Lu2007}
D.-M. Lu, J.~H. Palmieri, Q.-S. Wu, and J.~J. Zhang, \emph{Regular algebras of
  dimension 4 and their {$A_\infty$}-{E}xt-algebras},
  \href{http://dx.doi.org/10.1215/S0012-7094-07-13734-7}{Duke Math. J.
  \textbf{137} (2007)}, no.~3, 537--584.

\bibitem{Moyal1949}
J.~E. Moyal, \emph{Quantum mechanics as a statistical theory}, Proc. Cambridge
  Philos. Soc. \textbf{45} (1949), 99--124.

\bibitem{Polishchuk1997}
A.~Polishchuk, \emph{Algebraic geometry of {P}oisson brackets}, J. Math. Sci.
  (N. Y.) \textbf{84} (1997), no.~5, 1413--1444. Algebraic geometry, 7.

\bibitem{Polishchuk2005}
A.~Polishchuk and L.~Positselski, \emph{Quadratic algebras}, University Lecture
  Series, vol.~37, American Mathematical Society, Providence, RI, 2005.

\bibitem{Pym2013}
B.~Pym, \emph{{Poisson Structures and Lie Algebroids in Complex Geometry}},
  Ph.D. thesis, University of Toronto, 2013.

\bibitem{Reyes2013}
M.~Reyes, D.~Rogalski, and J.~J. Zhang, \emph{Skew {C}alabi-Yau Algebras and
  Homological Identities}, \href{http://arxiv.org/abs/1302.0437}{{\tt
  1302.0437}}.

\bibitem{Schwarzenberger1961}
R.~L.~E. Schwarzenberger, \emph{Vector bundles on the projective plane}, Proc.
  London Math. Soc. (3) \textbf{11} (1961), 623--640.

\bibitem{Shoikhet2010}
B.~Shoikhet, \emph{Koszul duality in deformation quantization and {T}amarkin's
  approach to {K}ontsevich formality},
  \href{http://dx.doi.org/10.1016/j.aim.2009.12.010}{Adv. Math. \textbf{224}
  (2010)}, no.~3, 731--771.

\bibitem{Sklyanin1982}
E.~K. Sklyanin, \emph{Some algebraic structures connected with the
  {Y}ang-{B}axter equation}, Funktsional. Anal. i Prilozhen. \textbf{16}
  (1982), no.~4, 27--34, 96.

\bibitem{Smith1992}
S.~P. Smith and J.~T. Stafford, \emph{Regularity of the four-dimensional
  {S}klyanin algebra}, Compositio Math. \textbf{83} (1992), no.~3, 259--289.

\bibitem{Stafford2001}
J.~T. Stafford and M.~van~den Bergh, \emph{Noncommutative curves and
  noncommutative surfaces},
  \href{http://dx.doi.org/10.1090/S0273-0979-01-00894-1}{Bull. Amer. Math. Soc.
  (N.S.) \textbf{38} (2001)}, no.~2, 171--216.

\bibitem{Staniszkis1996}
J.~M. Staniszkis, \emph{Linear modules over {S}klyanin algebras}, J. London
  Math. Soc. (2) \textbf{53} (1996), no.~3, 464--478.

\bibitem{Sage}
W.~Stein et~al., \emph{{S}age {M}athematics {S}oftware ({V}ersion 5.4)}, The
  Sage Development Team, 2013. {\tt http://www.sagemath.org}.

\bibitem{Tate1996}
J.~Tate and M.~van~den Bergh, \emph{Homological properties of {S}klyanin
  algebras}, \href{http://dx.doi.org/10.1007/s002220050065}{Invent. Math.
  \textbf{124} (1996)}, no.~1-3, 619--647.

\bibitem{Vey1975}
J.~Vey, \emph{D\'eformation du crochet de {P}oisson sur une vari\'et\'e
  symplectique}, Comment. Math. Helv. \textbf{50} (1975), no.~4, 421--454.

\bibitem{Weinstein1997}
A.~Weinstein, \emph{The modular automorphism group of a {P}oisson manifold},
  \href{http://dx.doi.org/10.1016/S0393-0440(97)80011-3}{J. Geom. Phys.
  \textbf{23} (1997)}, no.~3-4, 379--394.

\bibitem{Xu1999}
P.~Xu, \emph{Gerstenhaber algebras and {BV}-algebras in {P}oisson geometry},
  \href{http://dx.doi.org/10.1007/s002200050540}{Comm. Math. Phys. \textbf{200}
  (1999)}, no.~3, 545--560.

\bibitem{Zhang1996}
J.~J. Zhang, \emph{Twisted graded algebras and equivalences of graded
  categories}, \href{http://dx.doi.org/10.1112/plms/s3-72.2.281}{Proc. London
  Math. Soc. (3) \textbf{72} (1996)}, no.~2, 281--311.

\bibitem{Zhang2009}
J.~J. Zhang and J.~Zhang, \emph{Double extension regular algebras of type
  (14641)}, \href{http://dx.doi.org/10.1016/j.jalgebra.2009.03.041}{J. Algebra
  \textbf{322} (2009)}, no.~2, 373--409.

\end{thebibliography}
\end{document}